\newcommand{\ignore}[1]{}
\pgfplotsset{compat=1.15}
\providecommand{\keywords}[1]
{
  \small	
  \textbf{Keywords:} #1
}
\providecommand{\msc}[1]
{
  \small	
  \textbf{MSC classification:} #1
}
\newtheorem{thm}{Theorem}\theoremstyle{plain}
\newtheorem{theorem}[thm]{Theorem}\theoremstyle{plain}
\newtheorem{proposition}[thm]{Proposition}\theoremstyle{plain}
\newtheorem{lemma}[thm]{Lemma}\theoremstyle{plain}
\theoremstyle{plain}
\theoremstyle{plain}
\newtheorem{claim}[thm]{Claim}\theoremstyle{plain}
\theoremstyle{plain}
\theoremstyle{plain}
\theoremstyle{plain}
\theoremstyle{plain}
\newtheorem{problem}{Problem}\theoremstyle{plain}
\theoremstyle{plain}
\theoremstyle{plain}
\newtheorem{conjecture}[thm]{Conjecture}\theoremstyle{plain}
\theoremstyle{plain}
\DeclareMathOperator{\rank}{rank}
\DeclareMathOperator{\Vol}{Vol}
\DeclareMathOperator{\aff}{aff}
\DeclareMathOperator{\conv}{conv}
\DeclareMathOperator{\lk}{lk}
\DeclareMathOperator{\st}{st}
\newcommand{\bd}{{\bf d}}
\newcommand{\be}{{\bf e}}
\newcommand{\cA}{{\mathcal A}}
\newcommand{\cT}{{\mathcal T}}
\newcommand{\cS}{{\mathcal S}}
\newcommand{\R}{{\mathbb R}}
\newcommand{\rat}{{\mathbb Q}}
\newcommand{\sm}{\setminus}
\begin{document}
\title{Volume Rigidity of Simplicial Manifolds}
\author{James Cruickshank\thanks{School of Mathematical and Statistical Sciences, 
University of Galway, Ireland. E-mail: james.cruickshank@universityofgalway.ie}, Bill Jackson\thanks{School of Mathematical Sciences, Queen Mary
University of London, Mile End Road, London E1 4NS, United Kingdom.
E-mail: b.jackson@qmul.ac.uk} and Shin-ichi Tanigawa\thanks{Department of Mathematical Informatics, Graduate School of Information Science and Technology, University of Tokyo, 7-3-1 Hongo, Bunkyo-ku, 113-8656,  Tokyo Japan. Email: {tanigawa@mist.i.u-tokyo.ac.jp}}}

\maketitle

\begin{abstract}
Classical results of Cauchy \cite{C} and Dehn \cite{D} imply that the 1-skeleton of a convex simplicial polyhedron $P$ is rigid, i.e.~every continuous motion of the vertices of $P$  in $\R^3$ which preserves its edge lengths results in a polyhedron which is congruent to $P$. This result was extended to convex simplicial polytopes in $\R^d$ for all $d\geq 3$ by Whiteley \cite {W84}, and to generic realisations of 1-skeletons of simplicial $(d-1)$-manifolds in $\R^{d}$ by Kalai \cite{K} for $d\geq 4$ and Fogelsanger \cite{F}  for $d\geq 3$. We will generalise Kalai's result by showing that, for all $d\geq 4$ and any fixed $1\leq k\leq d-3$, every generic realisation 
of the $k$-skeleton of  a simplicial $(d-1)$-manifold 
in $\R^{d}$ is volume rigid,  
i.e.~every continuous motion of its vertices 
in $\R^d$ which preserves the volumes of 
its  $k$-faces results in a congruent realisation.
In addition, we conjecture that our result remains true for $k=d-2$ and verify this conjecture when $d=4,5,6$.
\end{abstract}

\keywords{rigidity, volume, hypergraph, simplicial complex, simplicial manifold}

\msc{52C25 (Primary); 05E45, 57Q15 (Secondary)}

\section{Introduction}

We will consider the general problem of deciding when a given realisation $p$ of a hypergraph $H=(V,E)$ in $\R^d$ is {\em volume rigid}: we consider a map $p:V\to \R^d$ and ask if every continuous motion of the points $p(v)$  which preserves the volume of every hyperedge of $H$, results in a realisation which is congruent to $p$. We will primarily be concerned with {\em generic realisations}, i.e.~realisations $p$ such that the multiset of coordinates of the points $p(v)$, $v\in V$, is algebraically independent over $\rat$. In this case we will see that the volume rigidity of $(H,p)$ is determined completely by the underlying hypergraph $H$.

The special case of this problem when $H$ is a graph is a well studied and active area of discrete geometry but the general case of hypergraphs is largely unexplored. 
Classical results of Cauchy \cite{C} and Dehn \cite{D} imply that the 1-skeleton of a convex simplicial polyhedron $P$ is (volume) rigid  in $\R^3$ i.e. every continuous motion of the vertices of $P$ which preserves the edge lengths results in a polyhedron which is congruent to $P$. This result was extended to 1-skeletons of convex simplicial polytopes in $\R^d$ for all $d\geq 3$ by Whiteley \cite{W84}, and to generic realisations of 1-skeletons of simplicial $(d-1)$-manifolds in $\R^{d}$ by Kalai \cite{K} for $d\geq 4$ and Fogelsanger \cite{F}  for $d\geq 3$. We will extend Kalai's result to volume rigidity:

\begin{theorem}\label{thm:d-3}
Let $H$ be the $(k+1)$-uniform hypergraph consisting of the $k$-faces of a connected simplicial $(d-1)$-manifold and $p$ be a  generic realisation of  $H$ in $\R^{d}$ for some $d\geq 4$ and some $1\leq k\leq d-3$. Then $(H,p)$ is volume rigid.
\end{theorem}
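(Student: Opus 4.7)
The plan is to translate volume rigidity of $(H,p)$ into an infinitesimal rank condition on a suitable rigidity matrix and then reduce to Kalai's edge-length theorem via an inductive link-of-a-vertex argument that descends the parameters $(k,d)$ to $(k-1,d-1)$.

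First I would introduce the \emph{volume rigidity matrix} $R(H,p)\in\R^{|E(H)|\times dn}$ whose row corresponding to a $k$-face $\sigma=\{v_0,\dots,v_k\}$ records the partial derivatives of $\Vol(p|_\sigma)$ with respect to the coordinates of each vertex $v_i\in\sigma$. A direct computation yields
\[
\nabla_{p(v_i)} \Vol(p|_\sigma) \;=\; \tfrac{1}{k}\,\Vol(p|_{\sigma\setminus v_i})\,\hat h_i,
\]
where $\hat h_i$ is the unit altitude vector from the affine span of the opposite face $p(\sigma\setminus v_i)$ to $p(v_i)$. The kernel of $R(H,p)$ always contains the $\binom{d+1}{2}$-dimensional space of infinitesimal Euclidean isometries of $\R^d$, so since $p$ is generic, volume rigidity of $(H,p)$ is equivalent to the rank condition $\rank R(H,p) = dn-\binom{d+1}{2}$, or dually to the vanishing of every \emph{volume stress} (left kernel element of $R$).

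The core of the proof should be an inductive reduction. For each vertex $v\in V$, the link $\operatorname{lk}_M(v)$ is a connected simplicial $(d-2)$-manifold whose $(k-1)$-faces are in bijection with the $k$-faces of $M$ containing $v$. Radially projecting $p$ from $p(v)$ onto a generic hyperplane in $\R^{d-1}$ gives a generic realisation of $\operatorname{lk}_M(v)$, and I would show that a volume stress on $H$ around $v$ projects to a volume stress on the hypergraph of $(k-1)$-faces of $\operatorname{lk}_M(v)$ in $\R^{d-1}$. Applying the inductive hypothesis at parameters $(k-1,d-1)$ shows that all such projected stresses are trivial, and assembling this information over all vertices should force the original stress on $H$ to be trivial. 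The induction bottoms out at $k=1$ with ambient dimension $d-k+1\ge 4$, which is exactly ensured by the codimension hypothesis $k\le d-3$, and there Kalai's theorem applies.

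The hard part will be justifying the local-to-global passage: a single $k$-face $\sigma$ contributes to the link of every vertex $v\in\sigma$, and compatibility across these $k+1$ viewpoints is a nontrivial combinatorial-geometric condition. One likely needs a Fogelsanger-style stress decomposition on $M$ itself, together with an interpretation of the volume gradient formula above that identifies precisely which part of the stress at $\sigma$ is captured by the projection to $\operatorname{lk}_M(v)$. A secondary technical point is that the projected realisation of each link must inherit the genericity of $p$ in $\R^d$, so that the inductive hypothesis is genuinely applicable; this should follow from the fact that radial projection is a rational map and genericity is preserved under such maps.
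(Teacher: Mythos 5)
Your proposal is an outline rather than a proof: the step you yourself identify as ``the hard part''---showing that a volume stress on $H$ induces, at each vertex $v$, a volume stress on the $(k-1)$-skeleton hypergraph of the link of $v$ realised in $\R^{d-1}$, and that triviality of all these induced stresses forces triviality of the original one---is precisely the missing content, and nothing in the proposal indicates how to prove it. The volume of a $k$-face containing $v$ is not a function of the radial projections of its other vertices from $p(v)$, so the claimed descent of left-kernel vectors of the volume rigidity matrix is not automatic; it would require a projection/coning lemma for volume rigidity that you do not establish. There is also a topological problem with the induction as set up: for $d-1\geq 5$ (so already for $d=6$, which is within the range of the theorem) the link of a vertex in a triangulated $(d-1)$-manifold need not be a simplicial $(d-2)$-manifold (vertex links in the double-suspension triangulations of spheres can be non-manifolds), so the inductive hypothesis cannot be applied to links without first enlarging the class of complexes under consideration---exactly the difficulty Kalai had to confront in the edge-length setting.

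The paper's proof avoids all of this and is essentially a two-line reduction. By Lemma~\ref{lem:simplex}, the complete $(k+1)$-uniform hypergraph on the $k+3$ vertices of any $(k+2)$-face of $\cS$ is volume rigid in $\R^d$, so any volume-preserving infinitesimal flex of $(H,p)$ restricts to a trivial motion on each $(k+2)$-face and in particular preserves the lengths of all edges inside that face. Since $k\leq d-3$, every edge of $\cS$ lies in some $(k+2)$-face, hence the flex preserves every edge length of the $1$-skeleton, and Kalai's theorem for $d\geq 4$ finishes the argument. If you wish to pursue your route, the coning lemma the paper proves for other purposes (Lemma~\ref{lem:v_add}) is the kind of statement you would need; but as written your proposal does not constitute a proof.
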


We will verify Theorem \ref{thm:d-3} in Section~\ref{sec:jface rigid}, using a relatively short inductive argument  based on Kalai's Theorem and Lemma \ref{lem:simplex} below (which gives the special case of Theorem \ref{thm:d-3} when $H$ is a complete uniform hypergraph).

We conjecture that Theorem \ref{thm:d-3} remains true when $k=d-2$.

\begin{conjecture}\label{con:d-2}
Let $H$ be the $(d-1)$-uniform hypergraph consisting of the $(d-2)$-faces of a connected simplicial $(d-1)$-manifold and $p$ be a  generic realisation of  $H$ in $\R^{d}$ for some $d\geq 3$. Then $(H,p)$ is volume rigid.
\end{conjecture}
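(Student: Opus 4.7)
The plan is to set up the infinitesimal theory analogous to the proof of Theorem \ref{thm:d-3} and proceed by induction on $d$. Let $n=|V|$. For each $(d-2)$-face $e$, preservation of $\Vol(p(e))$ to first order is a single linear equation in the velocities $\dot p:V\to\R^d$; assembling these yields a volume rigidity matrix $R_V(H,p)$ whose kernel always contains the $\binom{d+1}{2}$-dimensional space of infinitesimal Euclidean isometries. Volume rigidity at a generic $p$ is therefore equivalent to $\rank R_V(H,p)=dn-\binom{d+1}{2}$, so the problem reduces to a combinatorial rank computation.

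The base case $d=3$ is Fogelsanger's theorem, since then the $(d-2)$-faces are edges and $\Vol$ is length. For the inductive step, my aim is to show: if the $(d-3)$-skeleton of every connected simplicial $(d-2)$-manifold is volume rigid in $\R^{d-1}$, then the $(d-2)$-skeleton of every connected simplicial $(d-1)$-manifold is volume rigid in $\R^d$. The natural bridge is the link: $\mathrm{lk}_M(v)$ is a simplicial $(d-2)$-sphere, and the $(d-2)$-faces of $M$ through $v$ correspond to the $(d-3)$-faces of $\mathrm{lk}_M(v)$. The hypothesis on the link cannot be extracted from Theorem \ref{thm:d-3} alone, since that theorem requires $k\leq(d-1)-3=d-4$ whereas we need $k=d-3$; this is why the inductive hypothesis must be the conjecture itself in one lower dimension, with $d=3$ supplying the base.

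To transfer rigidity from links to the whole manifold, I would aim to prove a coning lemma: if the $(d-3)$-skeleton of a simplicial $(d-2)$-sphere $S$ is volume rigid in $\R^{d-1}$, then the $(d-2)$-skeleton of the cone $v*S$ is volume rigid in $\R^d$. Combined with a Fogelsanger-style decomposition of the underlying $\mathbb{Z}_2$-cycle $M$ as a sum of closed stars glued along shared $(d-2)$-faces, this would close the induction. The main obstacle is the coning lemma itself: whereas length constraints lift linearly from $\R^{d-1}$ to $\R^d$ by appending a coordinate, the $(d-2)$-volume of a coned simplex $v*f$ couples the $(d-3)$-volume of $f$ with the distance from $v$ to $\aff(f)$ in a genuinely nonlinear way. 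Controlling the infinitesimal kernel of the lifted matrix at generic realisations is the delicate step, and I expect this is what forces the verifications for $d=4,5,6$ in the present paper to proceed via direct computation or specialised stress arguments rather than through a uniform inductive argument.
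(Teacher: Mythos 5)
The statement you are proving is stated in the paper as a conjecture, and your proposal does not close it either: what you have written is a reduction of Conjecture~\ref{con:d-2} to an unproven coning lemma, and that lemma is precisely the open crux. Concretely, you need: if the $(d-3)$-skeleton of a simplicial $(d-2)$-sphere $S$ is volume rigid in $\R^{d-1}$, then the $(d-2)$-skeleton of $v*S$ is volume rigid in $\R^d$. The paper's coning lemma (Lemma~\ref{lem:v_add}) does not deliver this. Its hypotheses are that $H-w$ is volume rigid in $\R^{d}$ and that $\rank\cA_d(H_w)=|V|-1$; for $H=H_{d-2}(v*S)$ and $w=v$, the hypergraph $H-v$ is the \emph{facet} hypergraph of the $(d-2)$-sphere $S$, which is generically flexible (exactly the counting obstruction the paper notes for triangulated $2$-spheres in $\R^3$), while the hypergraph whose rigidity you want to exploit, $H_{d-3}(S)$, only enters as $H_v$ through the rank condition on $\cA_d$. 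Remark~(a) after Lemma~\ref{lem:v_add} indicates that one could in principle analyse the full limiting block matrix in this degenerate situation, but no such analysis is carried out, and the paper's Conjecture~\ref{lem:plane d-cone} records that even the special case of coning a $2$-sphere with $d-3$ apices remains open. So the step you flag as ``delicate'' is not merely delicate; it is the entire content of the conjecture, and asserting the induction closes ``combined with a Fogelsanger-style decomposition'' does not constitute a proof.

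Your plan also diverges from how the paper actually obtains its partial results, in a way worth noting. For $d=4,5,6$ (Theorems~\ref{thm:d=4} and~\ref{thm:d=5}) the paper does not cone over the full vertex link; it takes the link of a $(d-4)$-face, which is a $2$-sphere, and proves rigidity of its simplicial cone by induction on the number of vertices using edge contractions in the $2$-sphere together with the vertex-splitting Lemma~\ref{lem:vsplit_kface} (Lemmas~\ref{lem:plane 1-cone} and~\ref{lem:plane 2-cone}), with computer verification of the base cases and of the rank condition on $A_{uv}(H,q,\bd)$ when $d=5,6$. The stars are then assembled with the gluing Lemma~\ref{lem:glue_k}, and the degenerate case where the link has only four vertices is handled separately via a copy of $K_{d+1}^{d-1}$ and Lemma~\ref{lem:simplex}. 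Two further points in your sketch need repair even granting your coning lemma: gluing along a shared $(d-2)$-face gives only $d-1$ common vertices, one short of the $d$ required by Lemma~\ref{lem:glue_k}, so you must glue pieces sharing a full facet as the paper does; and stars over small links can fail the edge count entirely (this is why Conjecture~\ref{lem:plane d-cone} carries the hypothesis $|V(\cS)|\geq\lceil d/2\rceil+2$), so a decomposition into closed stars cannot work uniformly without an extra argument for small links.
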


Conjecture \ref{con:d-2} holds when $d=3$ by the above mentioned result of Fogelsanger. We will show that our conjecture also holds when either $H$ is a complete $(d-1)$-uniform hypergraph on at least $d+1$ vertices (Theorem \ref{thm:complete}), or $d\in \{4,5,6\}$ 
(Theorem \ref{thm:d=5}). 
These results require substantial effort. In particular we will need a non-trivial extension of Whiteley's Vertex Splitting Lemma to volume rigidity, which we prove in Section \ref{sec:split}. 

We also extend Whiteley's coning lemma to the setting of volume rigidity. While this is not needed to prove our main theorems we include it since it provides a new and non-trivial extension of the classical coning theorem for graph rigidity.

In view of the above results and conjecture, it is natural to ask about the volume rigidity of generic realisations of  the $d$-uniform hypergraph consisting of the $(d-1)$-faces of a connected simplicial $(d-1)$-manifold in $\R^d$. It is not difficult to see that such a hypergraph may not have enough $(d-1)$-faces to be generically volume rigid in $\R^d$. For example, the hypergraph consisting of the 2-faces  of a triangulation of the sphere  
is not generically volume rigid in $\R^3$. This follows since every such triangulation on $n$ vertices has $(2n-4)$ 2-faces and hence every generic realisation in $\R^3$  will have at least $3n-(2n-4)=n+4$ degrees of freedom.  But to be rigid in $\R^3$ it would have to have precisely $6$ degrees of freedom (given by the 6-dimensional space of translations and rotations of $\R^3$).

We can also ask about the volume rigidity of generic realisations of  the $(d+1)$-uniform hypergraph $H=(V,E)$ consisting of the $d$-faces of a connected simplicial $d$-manifold in $\R^{d}$. In this case Borcea and Streinu \cite{BS} observe that such a hypergraph will {\em never} be generically volume rigid in our setting since it will always have at least $d^2+d-1$ degrees of freedom (and would have to have ${d+1\choose2}$ degrees of freedom to be generically rigid in $\R^{d}$ in our setting). On the other hand, 
there are several results which  show that certain families of $(d+1)$-uniform hypergraphs are `volume rigid' in $\R^d$ in the sense that their generic realizations  have precisely $d^2+d-1$ degrees of freedom. Notable examples are the results of Whiteley, see \cite[Section 2.1]{BS}, and
Bulavka, Nevo, and Peled \cite{BNP} which imply 
that the  $3$-uniform hypergraph consisting of the 2-faces of a triangulated surface of low genus is `volume rigid' in $\R^2$. 
The problem of counting the number of distinct realisations of a `volume rigid' $(d+1)$-uniform hypergraph  in $\R^d$
which are equivalent to a given generic  realisation is also explored in \cite{BS,S}. In addition,
Lubetzky and Peled~\cite{LP} use this version of volume rigidity to analyse bootstrap percolation on random hypergraphs.

\section{Volume rigidity of hypergraphs}
In this section we shall give a formal definition of volume rigidity and derive some basic properties.

\subsection{Volume Preserving Affine Maps}
In order to extend the notion of edge-length rigidity to hyperedge-volume rigidity, 
we first need to understand which affine maps preserve volumes of simplices in $\R^d$. {We will use the exterior algebra of $\R^d$ to accomplish this.}
{The material in this subsection is classical and well-known, but we will provide a self-contained exposition since it is fundamental 
{to our understanding} of volume rigidity. }

We use $\R^d$ to denote $d$-dimensional Euclidean space i.e. the $d$-dimensional vector space over $\mathbb{R}$ equipped with the Euclidean inner product $\langle \cdot, \cdot\rangle$.
Let $\be_1,\dots, \be_d$ be the canonical basis of $\mathbb{R}^d$.
For an  integer $k$ with $1\leq k\leq d$, the $k$-th exterior product of vectors $x_1,\dots, x_k\in \mathbb{R}^d$  is denoted by 
$x_1\wedge \dots \wedge x_k$.
The $k$-th exterior power $\bigwedge^k \mathbb{R}^d$ of $\mathbb{R}^d$ is the linear span of the $k$-th exterior products of all possible collections of $k$ vectors in $\mathbb{R}^d$.
An element of $\bigwedge^k \mathbb{R}^d$ is said to be {\em decomposable} if it can be written as $x_1\wedge \dots \wedge x_k$ for some $x_1,\dots, x_k\in \mathbb{R}^d$.

The inner product of $\mathbb{R}^d$ induces an inner product on $\bigwedge^k \mathbb{R}^d$, which is given by
\begin{equation}\label{eq:inner_product_main}
\langle x_1\wedge\dots \wedge x_k, y_1\wedge \dots \wedge y_k \rangle
=\det \begin{bmatrix}
\langle x_1,y_1\rangle  & \langle x_2, y_1\rangle & \ldots  & \langle x_k,y_1\rangle  \\
 \langle x_1, y_2\rangle &  \langle x_2, y_2\rangle  &   & \vdots \\ 
 \vdots &  & \ddots &   \vdots \\
\langle x_1, y_k\rangle & \dots & \dots & \langle x_k, y_k\rangle 
\end{bmatrix}
\end{equation}
and is then extended linearly to all elements of $\bigwedge^k \mathbb{R}^d$. 

We may use this inner product to give a concise expression for the volume of a simplex in $\R^d$. Given  $k \geq 1$ and 
a set of $k+1$ points $P=\{p_1, p_2, \dots, p_{k+1}\}$ in $\mathbb{R}^d$ 
we put
\begin{equation}\label{eq:vol}
 \Vol(P)=\frac{1}{k!}\|(p_2-p_1)\wedge (p_3-p_1)\wedge \dots \wedge (p_{k+1}-p_1)\|,     
 \end{equation}
where $\| \cdot \|$ denotes the norm defined by the inner product $\langle \cdot, \cdot\rangle$ on $\bigwedge^k \R^d$.
When $k\leq d$, $\Vol(P)$ is the square of the volume of the simplex defined by $P$ in any $k$-dimensional affine subspace of $\R^d$ which contains $P$. Note that $x_1\wedge\dots \wedge x_k={\bf 0}$ whenever  the multiset $\{x_1,x_2,\ldots,x_k\}$ is linearly dependent and hence $\Vol(P)=0$ whenever either  $k>d$, or, $k\leq d$ and the points in $P$ are not affinely independent.

We next turn to the problem of determining which affine maps preserve $\Vol(P)$ for all sets $P$ of $k+1$ points in $\R^d$. We may assume that $k\leq d$ since $\Vol(P)=0$ whenever $k>d$.  In this case we may use Pl{\"u}cker coordinates (defined below) to express the inner product 
as the ordinary scalar product in a Euclidean space.

Consider the ${d\choose k}$-dimensional Euclidean space $\mathbb{R}^{d\choose k}$ whose coordinates are indexed by the $k$-subsets of $\{1,\dots, d\}$.
Let $[d]=\{1,\dots, d\}$ and denote by ${[d] \choose k}$  the set of all $k$-subsets of $[d]$.
For clarity of the subsequent discussion, we will always assume that 
the coordinates of $\mathbb{R}^{d\choose k}$ are ordered in the lexicographical ordering of the $k$-sets $I\in {[d]\choose k}$.
For example, if $d=4$ and $k=2$, then the coordinates are indexed in the order $12, 13, 14, 23, 24, 34$.

The  {\em Pl{\"u}cker coordinate} of an 
element 
of $\bigwedge^k \mathbb{R}^d$ is the 
vector in $\mathbb{R}^{d\choose k}$
given by taking the
coordinate vector of 
this element
with respect to the canonical basis
$\{\be_{i_1}\wedge \dots \wedge \be_{i_k}: I=\{i_1,\dots, i_k\}\in {[d]\choose k}\}$ of $\bigwedge^k \mathbb{R}^d$. 
Given a decomposable 
element 
$x_1\wedge \dots \wedge x_k\in 
\bigwedge^k \mathbb{R}^d$,
let $X$ be the $d\times k$ matrix whose $i$-th column is $x_i$,
and let  $X_I$ be the square submatrix of $X$ obtained by picking the rows indexed by $i_1,\dots, i_k$ for $I=\{i_1,\dots, i_k\}\in {[d]\choose k}$.
Then, the $I$-th Pl{\"u}cker coordinate of $x_1\wedge \dots \wedge x_k$ is $\det X_I$. 
The Cauchy-Binet formula now implies that, for $x_1\wedge \dots \wedge x_k$
and $y_1\wedge \dots \wedge y_k$ in $\bigwedge^k \mathbb{R}^d$, 
\begin{align*}
\langle x_1\wedge\dots \wedge x_k, y_1\wedge \dots \wedge y_k \rangle
=\det(X^{\top} Y)
=\sum_{I\in \binom{[d]}{k}} \det X_I \det Y_I,
\end{align*}
and the latter term is indeed the scalar product of the Pl{\"u}cker coordinates of $x_1\wedge \dots \wedge x_k$
and $y_1\wedge \dots \wedge y_k$ in $\mathbb{R}^{d\choose k}$. 
This correspondence can be extended to inner products of all pairs of elements of  $\bigwedge^k \mathbb{R}^d$ by linearity.

Given an $m\times n$ matrix $A$ and an integer $k$ with $1\leq k\leq \min\{m,n\}$, the {\em $k$-th compound matrix $A^{(k)}$} is the ${{m}\choose{k}}\times {{n}\choose{k}}$ matrix whose rows and columns are indexed by the lexicographic orderings of 
${[m] \choose k}$ and ${[n]\choose k}$, respectively, and whose entries are the corresponding $k\times k$ minors of $A$. 
The compound matrix encodes the Pl{\"u}cker coordinates of all possible
exterior products of $k$ vectors chosen from the $n$ column vectors of $A$.
Indeed, for $I=\{i_1, \dots, i_k\}$ with $i_1 < \dots < i_k$, 
the $I$-th column of $A^{(k)}$ is $a_{i_1}\wedge \dots \wedge a_{i_k}$,
where $a_{i_j}$ denotes the $i_j$-th column of $A$.
From this, one can easily check 
that
\[
Ax_1\wedge \dots \wedge Ax_k = A^{(k)} (x_1\wedge \dots \wedge x_k)
\]
for any $x_1,\dots, x_k\in \mathbb{R}^n$.

We will need the following well known properties of compound matrices, see for example \cite{H}.
\begin{lemma}\label{lem:compound}
Let $A$ be an $n\times n$ matrix and $k$ be a integer with $1\leq k\leq n$. Then:

(a) 
$\det A^{(k)}=(\det A)^{{n-1\choose k-1}}$.

(b) When $ k \leq n-1$, $A$ is orthogonal if and only if $A^{(k)}$ is orthogonal.
\end{lemma}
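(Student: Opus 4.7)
The plan is to reduce both parts to the single multiplicativity identity $(AB)^{(k)}=A^{(k)}B^{(k)}$, which follows by applying the displayed formula $(Ax_1)\wedge\dots\wedge (Ax_k)=A^{(k)}(x_1\wedge\dots\wedge x_k)$ to the columns of $B$. Two immediate corollaries are $(A^{\top})^{(k)}=(A^{(k)})^{\top}$ (the $(I,J)$-minor of $A^{\top}$ equals the $(J,I)$-minor of $A$) and $I_n^{(k)}=I_{\binom{n}{k}}$.

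For part (a), I would first verify the identity on upper triangular matrices. If $A$ is upper triangular with diagonal entries $\lambda_1,\dots,\lambda_n$ and $I=\{i_1<\dots<i_k\}$, $J=\{j_1<\dots<j_k\}$, then in the permutation expansion of $\det A_{I,J}$ each term vanishes unless the matching $i_t\mapsto j_{\pi(t)}$ satisfies $j_{\pi(t)}\ge i_t$ for all $t$; a Hall-style argument shows such a $\pi$ exists iff $i_s\le j_s$ for every $s$, which in turn fails whenever $I>J$ in lex order. Hence $A^{(k)}$ is upper triangular in lex order with diagonal entries $\det A_{I,I}=\prod_{i\in I}\lambda_i$. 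Since each index $i\in[n]$ belongs to exactly $\binom{n-1}{k-1}$ subsets $I\in\binom{[n]}{k}$,
\[
\det A^{(k)}=\prod_{I}\prod_{i\in I}\lambda_i=\Bigl(\prod_{i=1}^{n}\lambda_i\Bigr)^{\binom{n-1}{k-1}}=(\det A)^{\binom{n-1}{k-1}}.
\]
The general case follows from the Schur decomposition: over $\mathbb{C}$ we may write $A=PTP^{-1}$ with $T$ upper triangular, so by multiplicativity $A^{(k)}=P^{(k)}T^{(k)}(P^{(k)})^{-1}$ and therefore $\det A^{(k)}=\det T^{(k)}=(\det T)^{\binom{n-1}{k-1}}=(\det A)^{\binom{n-1}{k-1}}$.

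For part (b), the forward direction is a one-line computation: if $A^{\top}A=I_n$, then $(A^{(k)})^{\top}A^{(k)}=(A^{\top}A)^{(k)}=I_{\binom{n}{k}}$. For the converse, suppose $A^{(k)}$ is orthogonal, and set $B=A^{\top}A$, so that $B$ is real symmetric positive semidefinite and $B^{(k)}=I$. Diagonalising $B=Q^{\top}DQ$ with $Q$ orthogonal and $D=\mathrm{diag}(\mu_1,\dots,\mu_n)$, $\mu_i\ge 0$, the forward direction tells us that $Q^{(k)}$ is orthogonal, and then
\[
D^{(k)}=Q^{(k)}B^{(k)}(Q^{(k)})^{\top}=I.
\]
Since $D^{(k)}$ is the diagonal matrix with entries $\prod_{i\in I}\mu_i$, each such product equals $1$; comparing two $k$-subsets that differ in a single element forces $\mu_i=\mu_j$ for all $i,j$, and then $\mu_1^k=1$ with $\mu_1\ge 0$ yields $\mu_i=1$ throughout. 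Hence $B=I$ and $A$ is orthogonal.

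The only step demanding any real care is the lex-triangularity of $A^{(k)}$ in part (a); once that (and the multiplicativity identity) is in hand, both parts reduce to essentially formal manipulations.
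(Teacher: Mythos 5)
The paper gives no proof of this lemma at all---it simply cites Horn and Johnson \cite{H}---so your proposal supplies an argument the paper omits. What you have written is the standard textbook proof and it is correct: multiplicativity $(AB)^{(k)}=A^{(k)}B^{(k)}$ from the wedge identity applied to the columns of $B$; part (a) (the Sylvester--Franke theorem) via the lex-triangularity of the compound of a triangular matrix (your Hall-style argument for the vanishing of $\det A_{I,J}$ when $I>J$ in lex order is sound) together with the Schur decomposition; and part (b) via $(A^{\top}A)^{(k)}=(A^{(k)})^{\top}A^{(k)}$ and diagonalisation of $A^{\top}A$. One point worth flagging: in the converse of (b), the step where you compare two $k$-subsets differing in a single element tacitly requires $k\le n-1$. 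This is a defect of the statement as the paper gives it rather than of your argument: for $k=n$ the compound $A^{(n)}=[\det A]$ is a $1\times 1$ orthogonal matrix whenever $|\det A|=1$, so the ``if'' direction of (b) is genuinely false at $k=n$ (take $A=\mathrm{diag}(2,\tfrac12)$). The paper only ever invokes part (b) with $k<d$ (in Proposition~\ref{prop:isometries_main}) and handles $k=d$ separately through part (a), so nothing downstream is affected, but your write-up should state the restriction $k<n$ explicitly in part (b) rather than letting it fail silently.
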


We can now  determine which  affine maps preserve $\Vol(P)$ for all sets $P$ of $k$ points in $\R^d$ when $k\leq d$.
We note, for a reader who is unfamiliar with volumes of simplices,
that this problem is not completely trivial since 
the situation for 1-simplices is different to that for $d$-simplices when $d\geq 2$.
Indeed, if 
$P=\{p_1,\dots, p_{d+1}\}$,
then $d!{\rm Vol}(P)$ is the absolute value of the determinant of the $d\times d$ matrix with columns $p_2-p_1, \dots, p_{d+1}-p_1$.
Hence, a map $p\mapsto Ap+t$ for some $A\in \mathbb{R}^{d\times d}$ and $t\in \mathbb{R}^d$ preserves the volume of each $d$-simplex if and only if $|\det A|=1$.
This in particular implies that the space of such affine maps is
$(d^2+d-1)$-dimensional, see~\cite{BNP,LP,BS} for more details. On the other hand, it is well known that the map $p\mapsto Ap+t$ 
preserves the volume of each $1$-simplex in $\R^d$, i.e. the length of all line segments in $\R^d$, if and only if $A$ is orthogonal and hence the dimension of the space of all affine maps which preserve the volume of all 1-simplices in $\R^d$ is ${d+1}\choose 2$.

Combined with the expression for the volume of a set of $k+1$ points in $\R^d$ given by (\ref{eq:vol}), our next proposition implies that 
the case $k=1$ extends to all $1\leq k\leq d-1$, and it is the case $k=d$ that is exceptional.

\begin{proposition}\label{prop:isometries_main}
Let $A$ be a real $d\times d$ matrix  and $k$ be an integer with $1\leq k< d$.
Then, 
\begin{equation}\label{eq:prop_isometries_main}
\|Ax_1\wedge Ax_2\wedge \dots \wedge Ax_k\|=\|x_1\wedge x_2\wedge \dots \wedge x_k\| 
\end{equation}
for all $x_1, x_2, \dots, x_k\in \mathbb{R}^d$
if and only if 
$A$ is orthogonal.
\end{proposition}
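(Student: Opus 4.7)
The ``if'' direction is immediate: if $A$ is orthogonal then Lemma \ref{lem:compound}(b) gives that $A^{(k)}$ is orthogonal, and the identity $Ax_1\wedge \cdots \wedge Ax_k = A^{(k)}(x_1 \wedge \cdots \wedge x_k)$ recorded above Lemma \ref{lem:compound}, combined with the Pl\"ucker-coordinate identification of $\|\cdot\|$ on $\bigwedge^k \mathbb{R}^d$ with the standard inner product on $\mathbb{R}^{d\choose k}$, yields (\ref{eq:prop_isometries_main}) at once. The substance of the proposition lies in the ``only if'' direction.

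For that direction, my plan is to translate the hypothesis into a statement about $M:=A^\top A$. Squaring (\ref{eq:prop_isometries_main}) and applying the Gram determinant formula (\ref{eq:inner_product_main}) to the $d\times k$ matrix $X=[x_1, \ldots, x_k]$ converts the hypothesis into
\[
\det(X^\top M X)=\det(X^\top X) \quad \text{for every } d\times k \text{ matrix } X,
\]
so the goal becomes to show $M=I$. Since $M$ is symmetric and positive semi-definite, I diagonalise $M=U^\top D U$ with $U$ orthogonal and $D=\mathrm{diag}(d_1,\ldots,d_d)$, $d_i\ge 0$. Substituting $Y=UX$ (a bijection of $d\times k$ matrices) and applying the Cauchy--Binet formula on both sides rewrites the condition as
\[
\sum_{I\in {[d]\choose k}} (d_I - 1)(\det Y_I)^2 = 0 \quad \text{for every } d\times k \text{ matrix } Y,
\]
where $d_I:=\prod_{i\in I}d_i$ and $Y_I$ denotes the $k\times k$ submatrix of $Y$ picked out by the rows $I$. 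Specialising $Y$ to be supported on the rows indexed by a single $k$-subset $I_0$ kills every $(\det Y_I)^2$ other than $(\det Y_{I_0})^2$, and hence forces $d_{I_0}=1$ for every $I_0\in{[d]\choose k}$.

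The final step is where the hypothesis $k<d$ genuinely enters, and it is the only place in the argument that requires a moment's care. Given any $j\ne \ell$ in $[d]$, the inequality $d-2\ge k-1$ lets me pick $I_0\in {[d]\choose k}$ containing $j$ but not $\ell$; swapping $j$ for $\ell$ in $I_0$ produces a second $k$-subset whose $d$-product also equals $1$, and dividing forces $d_j=d_\ell$. Thus every $d_i$ equals a common value $\lambda\ge 0$ with $\lambda^k=1$, so $\lambda=1$, $D=I$, $M=I$, and $A$ is orthogonal. This also makes transparent why the conclusion fails at $k=d$: the argument then yields only the single relation $d_{[d]}=1$, i.e.\ $\det M = 1$, which is precisely the ``volume-preserving affine map'' phenomenon already flagged in the paragraphs preceding the proposition.
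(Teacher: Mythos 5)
Your proof is correct, but it takes a genuinely different route from the paper's. The paper argues entirely at the level of the compound matrix: it rewrites the hypothesis as $\|A^{(k)}\omega\|=\|\omega\|$ for all decomposable $\omega$, asserts that this forces $A^{(k)}$ to be orthogonal because the decomposable elements span $\bigwedge^k\mathbb{R}^d$, and then transfers orthogonality from $A^{(k)}$ back to $A$ via Lemma~\ref{lem:compound}(b). You instead pass to $M=A^\top A$, diagonalise, and use Cauchy--Binet to reduce the hypothesis to the identities $d_I=1$ for every $I\in{[d]\choose k}$, after which $k<d$ lets you equate all eigenvalues by swapping one index at a time. Each step of yours checks out (in particular the restriction of $Y$ to rows $I_0$ correctly kills all other minors, and $d_{I_0}=1$ forces every $d_i>0$, so the division step is legitimate). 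What your approach buys: the hard direction needs only the spectral theorem and Cauchy--Binet rather than the nontrivial half of Lemma~\ref{lem:compound}(b); it pinpoints exactly where $k<d$ enters and explains the $k=d$ anomaly in the same stroke; and it quietly repairs a step the paper treats rather lightly --- norm-preservation on the decomposables is a quadratic identity on the Grassmann cone, and ``the decomposables span'' is not by itself sufficient to upgrade it to orthogonality of $A^{(k)}$, since a nonzero quadratic form (e.g.\ a Pl\"ucker relation) can vanish on every decomposable element; your specialisation to matrices supported on a single row set $I_0$ does the work that this spanning claim elides. The paper's proof is shorter and reuses machinery it has already set up, but yours is the more self-contained and, arguably, the more airtight of the two.
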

\begin{proof}
Since $Ax_1\wedge  \dots \wedge Ax_k=A^{(k)} (x_1\wedge \dots \wedge x_k)$,
(\ref{eq:prop_isometries_main}) holds if and only if 
$\|A^{(k)}(x_1\wedge x_2\wedge \dots \wedge x_k)\|=
\|x_1\wedge x_2\wedge \dots \wedge x_k\|$
for all $x_1, x_2, \dots, x_k\in \mathbb{R}^d$.
Since the set of decomposable elements spans $\bigwedge^k\mathbb{R}^d$
and $\|\cdot \|$ is the Euclidean norm of $\bigwedge^k\mathbb{R}^d$ with respect to the canonical basis,
this condition is equivalent to the orthogonality of $A^{(k)}$.
This in turn is equivalent  to the orthogonality of $A$ by Lemma~\ref{lem:compound}.
\end{proof}
Note that the  argument in the proof of Proposition~\ref{prop:isometries_main}
can be easily adapted for the case when $k=d$.
In this case, we have $\bigwedge^d\mathbb{R}^d\simeq \mathbb{R}$,
and hence   
(\ref{eq:prop_isometries_main}) holds for $k=d$ if and only if 
$|\det A^{(d)}|=1$. This condition is equivalent to $|\det A|=1$ by Lemma~\ref{lem:compound}.

In view of applications to volume rigidity (discussed below), it would be of interest to  extend Proposition~\ref{prop:isometries_main} to the situation when \eqref{eq:prop_isometries_main} holds for all $k$-subsets of a finite set of points. 
To the best of our knowledge, the following problem is still open.
\begin{problem}
Let $d,k$ be positive integers with $1\leq k<d$.
Characterise when a finite set  of points $P$ in $\mathbb{R}^d$ has the following property: for every $d\times d$ matrix $A$, 
$\|Ax_1\wedge A x_2\wedge \dots \wedge A x_k\|=
\|x_1\wedge x_2\wedge \dots \wedge x_k\|$
for all $x_1, x_2,\dots, x_k\in P$ if and only if 
$A$ is orthogonal.
\end{problem}

\subsection{Volume rigidity}\label{sec:volrig}
In view of Equation (\ref{eq:vol}) and Proposition~\ref{prop:isometries_main}, we may define volume rigidity 
as follows.
Given two realisations $p, q$ of a hypergraph $H=(V,E)$ in $\mathbb{R}^d$,
we say that $(H,p)$ is {\em volume equivalent} to $(H,q)$ if 
${\rm Vol}(p(\Delta))={\rm Vol}(q(\Delta))$ holds for all $\Delta\in E$ and that
$p$ is {\em congruent} to $q$ if 
there is an orthogonal matrix $A$ and a vector $t\in \mathbb{R}^d$ such that
$q(v)=A p(v)+t$ for all $v\in V$. Then
$(H,p)$ is {\em volume rigid} if 
there is a neighborhood $N_p$ of $p$ in $\R^{d|V|}$ 
such that, for all $q\in N_p$ such that $(H,q)$ is volume equivalent to $(H,p)$, $q$ is congruent to $p$.
When $H$ is a graph, this definition coincides with the standard definition of the  rigidity of the bar-joint framework $(H,p)$.

A fundamental property of bar-joint rigidity is that a realization of a complete graph is rigid as long as the points of the realization are in general position.
The corresponding property does not always hold for volume rigidity.
For example, if $H$ is the complete $d$-uniform hypergraph on $d+1$ vertices,
then the number of hyperedges of $H$ is too small 
for a generic realization of $H$ in $\R^d$ to be volume rigid when $d\geq 3$.

Characterising which realisations of the complete $(k+1)$-uniform\footnote{We use $k+1$ here instead of $k$ since the 'simplicial dimension' of a $(k+1)$-set is $k$} hypergraph $K_n^{k+1}$ are volume rigid appears to be a challenging open problem. Our next result comes close to solving this problem for the special case of generic realisations.

\begin{theorem}\label{thm:complete}
Suppose $n,d,k$ are integers with $1\leq k\leq \min \{d-2,n-3\}$.
Let $K^{k+1}_n$ be the complete $(k+1)$-uniform hypergraph with $n$ vertices
and $p:V(K^{k+1}_n)\rightarrow \mathbb{R}^d$ be  generic.
Then $(K^{k+1}_n,p)$ is volume rigid.
\end{theorem}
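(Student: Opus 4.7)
The plan is to prove the theorem by induction on $n$, with base case $n=d+1$. Since $k\leq d-1$, Proposition~\ref{prop:isometries_main} puts us outside the exceptional regime, so volume-preserving affine maps coincide with Euclidean isometries and the space of trivial infinitesimal motions of $(K_n^k,p)$ has the usual dimension $\binom{d+1}{2}$. Volume rigidity at a generic $p$ is therefore equivalent to showing that the rigidity matrix attains rank $dn-\binom{d+1}{2}$.

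For the inductive step, I would assume that $(K_{n-1}^k,p)$ is generically volume rigid and extend $p$ to a generic realisation $p'$ of $K_n^k$ by a new vertex $v$, adjoining the $\binom{n-1}{k-1}$ hyperedges $\{v\}\cup S$ with $S\in\binom{[n-1]}{k-1}$. Any infinitesimal volume-preserving motion restricts to one on the sub-framework, which by the inductive hypothesis is trivial; after quotienting by congruences we may assume the first $n-1$ vertices are fixed, and it only remains to show that the velocity $\dot v$ must vanish. A direct calculation from~(\ref{eq:vol}) (using $\Vol(\{v\}\cup p(S)) = \frac{1}{k-1}\,h_S\,\Vol(p(S))$ with $h_S$ the distance from $v$ to $\aff(p(S))$) yields
\[
\nabla_v \Vol(\{v\}\cup p(S))^2 \;=\; c_S\,\bigl(v-\pi_{\aff(p(S))}(v)\bigr),\qquad c_S>0,
\]
where $\pi_{\aff(p(S))}$ denotes orthogonal projection onto the affine hull of $p(S)$. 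So the inductive step reduces to showing that these residual vectors span $\R^d$ as $S$ varies. Since $n-1\geq d$, this is a generic spanning statement and it suffices to exhibit one explicit configuration (for instance, placing $d$ of the $p_i$ along coordinate axes and $v$ generically) at which $d$ of the residuals are linearly independent; semi-continuity of matrix rank then transfers the conclusion to all generic $p'$.

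For the base case $n=d+1$, the hyperedges of $K_{d+1}^k$ are precisely the $k$-faces of the boundary of the $d$-simplex, which is a simplicial $(d-1)$-sphere on $d+1$ vertices. When $2\leq k\leq d-3$, Theorem~\ref{thm:d-3} applies directly. The remaining cases $k\in\{d-2,d-1\}$ need a separate argument and form the main obstacle of the proof. My approach would be to study the polynomial map
\[
\Phi_k\colon \R^{\binom{d+1}{2}}\longrightarrow \R^{\binom{d+1}{k}}
\]
sending the squared edge lengths of a $d$-simplex to its squared $k$-face volumes, defined through Cayley-Menger-type identities. Since the classical rigidity of $K_{d+1}$ in $\R^d$ already recovers congruence from edge-length data, it suffices to show that the Jacobian of $\Phi_k$ has full rank $\binom{d+1}{2}$ at a generic simplex. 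I would verify this at a highly symmetric realisation, for example the regular $d$-simplex, where the natural $S_{d+1}$-action makes $\Phi_k$ equivariant and decomposes its Jacobian into isotypic blocks that can be inspected one by one. The hard part will be the base-case Jacobian analysis for $k\in\{d-2,d-1\}$, since these values fall outside the range covered by Theorem~\ref{thm:d-3}; the 0-extension step in the induction is essentially routine once the gradient formula above is in hand.
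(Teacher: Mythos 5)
Your overall architecture --- reduce to the case $n=d+1$ and then grow $n$ by adding a vertex joined to all $(k-1)$-subsets --- parallels the paper's (which instead glues a fresh copy of $K_{d+1}^k$ onto $K_{n-1}^k$ along $d$ shared vertices, Lemma~\ref{lem:glue_k}), and your $0$-extension step is believable: the gradient formula you quote matches~\eqref{eq:rigmatrix}, and the spanning claim for the residual vectors is a routine generic statement, though you do not actually exhibit the witness configuration. Two smaller remarks on the base case: the case $k=d-2$ is in fact already covered by Theorem~\ref{thm:d-3} (the hyperedges of $K_{d+1}^{d-2}$ are the $(d-3)$-faces of the boundary of the $d$-simplex, and $d-3\le d-3$), so only $k=d-1$ genuinely needs a separate argument; and the paper avoids invoking Theorem~\ref{thm:d-3}/Kalai here altogether, instead inducting on $d-k$ from the diagonal case via an elementary coning lemma (Lemma~\ref{lem:v_add} together with Lemma~\ref{lem:alt1}), which keeps the argument self-contained.

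The genuine gap is the case $k=d-1$, $n=d+1$, i.e.\ the volume rigidity of $K_{d+1}^{d-1}$ in $\R^d$. You correctly identify this as ``the main obstacle'', but what you offer is a research plan, not a proof: ``study the Jacobian of $\Phi_{d-1}$ at the regular simplex and inspect the isotypic blocks one by one.'' This is precisely the content of the paper's Lemma~\ref{lem:simplex}, and it is the technical heart on which everything else (including the proof of Theorem~\ref{thm:d-3} that you want to cite for smaller $k$) ultimately rests. The paper's execution is genuinely nontrivial: it evaluates the rigidity matrix at the realisation $p(v_i)=\be_i$, performs row and column scalings and deletions to reduce the relevant square block to the adjacency matrix of the Kneser graph $K(d+1,2)$, and then invokes Lov\'asz's computation of the Kneser graph spectrum to conclude that this block is nonsingular, giving $\rank R(H,p)\ge\binom{d+1}{2}$. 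Your factorisation through the squared-edge-length map is a valid reduction (since $dg$ is surjective onto $\R^{\binom{d+1}{2}}$ for $d+1$ generic points, injectivity of $d\Phi_{d-1}$ would suffice), and the symmetric-realisation-plus-representation-theory strategy is morally the same computation, but until the nonsingularity of that $\binom{d+1}{2}\times\binom{d+1}{2}$ Jacobian is actually established, the proof does not exist. You should either carry out the isotypic analysis explicitly or recognise that the required input is exactly the Kneser-graph eigenvalue fact.
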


The special case of Theorem \ref{thm:complete} when 
$k\leq n-3 \leq d-2$ will be proved in the next subsection. The general case will be derived from this special case in Section \ref{sec:comb}  using a combinatorial lemma given  in the same section. 

Assuming that $n\geq d+1$, so that generic realisations of $K_n^{k+1}$ affinely span $\R^d$,
and that $k\leq d$, we are left to consider the cases when $k\in\{d-1,d\}$. The complete hypergraph   $K_n^{d+1}$ is not generically volume rigid in $\R^d$ for all $n$ by the above mentioned result of 
Borcea and Streinu~\cite{BS} or 
Lubetzky and Peled \cite{LP}. The complete hypergraph   $K_{n}^{d}$ does not have enough hyperedges to be  generically volume rigid in $\R^d$ when $n=d+1$,
but is generically volume rigid  when $n\geq d+2$ by Theorem \ref{thm:LNPR} below.

\subsection{Infinitesimal volume rigidity}

The {\em $d$-dimensional volume rigidity map} of a hypergraph $H=(V,E)$
is the function $f:\R^{d|V|}\to \R^{|E|}$ given by
$$f(p)=(f_p(\Delta_1),f_p(\Delta_2),\ldots,f_p(\Delta_m))$$
where  $E=\{\Delta_1,\Delta_2,\ldots,\Delta_m\}$  and $f_p(\Delta_h)=\Vol( p(\Delta_h))^2$ is as defined by (\ref{eq:vol}).  

We will define the {\em volume rigidity matrix} $R(H,p)$ of a realisation $p$ of $H$ in $\R^d$ to be a matrix obtained from the Jacobian of $f$ evaluated at $p$ by suitable scalar multipications of its rows, 
and will associate the rows of $R(H,p)$ with the hyperedges of $H$
and sets of $d$ consecutive  columns with its vertices.

Let $\Delta=\{v_0,v_1,\ldots,v_k\}$ be a hyperedge of  $H$.
For each $0\leq i\leq k$, let $p_i = p(v_i)$ and let $p_i^\Delta$ be the projection of $p_i$ onto the affine subspace 
of $\R^d$ spanned by $p(\Delta-v_i)$.
Then  (\ref{eq:inner_product_main}) and (\ref{eq:vol}) give 
$$\Vol( p(\Delta))^2=\begin{cases}
\frac{1}{k^2}\|p_i-p_i^\Delta\|^2 \Vol (p(\Delta-v_i))^2 & (k\geq 2) \\
\|p_1-p_0\|^2 & (k=1) 
\end{cases}
$$
Hence, if $k=1$, then  we can take the entry in row $\Delta$ and columns $v_i$ of $R(H,p)$ to be  
$(p_i-p_j)$
when $\{v_i,v_j\}=\Delta$ and to be $\bf 0$ when $v_i\not\in \Delta$.

It remains to consider the case when $|\Delta|\geq 3$.
Since 
$p_i^\Delta=P_{\Delta-v_i}(p_i-x)+x$ for any $x\in \aff (p(\Delta))$, where $P_{\Delta-v_i}$ is the matrix which projects $\R^d$ onto its $(k-1)$-dimensional linear subspace ${\rm aff}(p(\Delta-v_i))-x$, this gives
\begin{align*}
k^2 \frac{\partial \Vol( p(\Delta))^2}{\partial p_i}
&=2(I-P_{\Delta-v_i})(p_i-p_i^\Delta) \Vol (p(\Delta-v_i))^2\\
&=2(p_i-p_i^\Delta) \Vol( p(\Delta-v_i))^2.
\end{align*}
where the second equality follows from the fact that 
$p_i-p_i^\Delta$ is orthogonal to ${\rm aff}(p(\Delta-v_i))$.
Hence, when $|\Delta|\geq 3$, we can take the entry in row $\Delta$ and columns $v_i$ of $R(H,p)$ to be  
\begin{equation}\label{eq:rigmatrix}
(p_i-p_i^\Delta) \Vol( p(\Delta-v_i))^2
\end{equation}
when $v_i\in \Delta$ and to be $\bf 0$ when $v_i\not\in \Delta$.

\medskip

An {\em infinitesimal  motion} of $(H,p)$ is a map $\dot{p}:V\rightarrow \mathbb{R}^d$
which belongs to the right kernel of $R(H,p)$ (when regarded as a vector in $\R^{d|V|}$).
Since every Euclidean isometry preserves the volume of each simplex in $\R^d$, 
the map $v\mapsto Sp(v)+t$ for all $v\in V$  will be an infinitesimal motion of $(H,p)$ whenever $S$ is a  skew-symmetric $d\times d$ matrix and $t\in \mathbb{R}^d$. We will refer to such motions as  {\em trivial} infinitesimal motions of $(H,p)$.  

We say that
$(H,p)$ is {\em infinitesimally volume rigid} if
every infinitesimal motion of $(H,p)$ is trivial.
A well-known fact from rigidity theory is that the space of trivial infinitesimal motions has dimension
${d+1\choose 2}$ if $p(V)$ affinely spans $\R^d$.
More generally, the dimension of 
the space of trivial infinitesimal motions is 
${{d+1}\choose 2}-{d-d_p\choose 2}$ when the dimension of ${\aff} (p(V))$ is $d_p$.
Hence, $(H,p)$ is infinitesimally volume rigid if and only if
$\dim \ker R(H,p)={d+1\choose 2}-{d-d_p\choose 2}$.
In particular, when $p(V)$ affinely spans $\mathbb{R}^d$, $(H,p)$ is infinitesimally volume rigid if and only if
\[
\rank R(H,p)=d|V|-\binom{d+1}{2}.
\]

As noted in the last subsection, the case when $H$ is $(d+1)$-uniform is exceptional and there exists a stronger lower bound on the dimension of the space of infinitesimal  motions of  $(H,p)$ in this case.
This led  
previous authors~\cite{BS,LP,BNP}
to use a different definition for the infinitesimal volume rigidity of realisations of $(d+1)$-uniform hypergraphs in $\R^d$. We refer the reader to their papers for more details.

Note that, since the volume rigidity map $f$ is a polynomial map with rational coefficients,  $f(p)$ will be a  regular value of  $f$ whenever $p$ is generic, and hence $f^{-1}(f(p))$ will be a manifold of dimension $d|V|-\rank R(H,p)$. Since the submanifold generated by the trivial motions of $(H,p)$ has dimension $d|V|-\binom{d+1}{2}+\binom{d-d_p}{2}$ by \cite{AR}, this gives:
\begin{proposition}\label{prop:equivalence}
Let $H=(V,E)$ be a hypergraph 
and $p:V\rightarrow \mathbb{R}^d$ be generic.
Then $(H,p)$ is volume rigid if and only if it is infinitesimally volume rigid.
\end{proposition}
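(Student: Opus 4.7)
The plan is to adapt the classical Asimow-Roth theorem \cite{AR} for bar-joint frameworks to this setting. The key input, stated just before the proposition, is that for generic $p$ the value $f(p)$ is a regular value of the volume rigidity map $f:\R^{d|V|}\to \R^{|E|}$. A direct inspection of the definitions shows that each row of $R(H,p)$ equals the corresponding row of $Df(p)$ multiplied by $k^2/2$, where $k=|\Delta|-1$ depends only on the hyperedge indexing the row; in particular, for every $p$, $\ker R(H,p)=\ker Df(p)$ and $\rank R(H,p)=\rank Df(p)$.

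For the non-trivial direction, assume $(H,p)$ is infinitesimally volume rigid. By the implicit function theorem, applied at the regular value $f(p)$, there is a neighborhood $U$ of $p$ in $\R^{d|V|}$ on which $M:=f^{-1}(f(p))\cap U$ is a smooth submanifold with tangent space $T_pM=\ker R(H,p)$. The orbit $\mathcal{O}$ of $p$ under the Euclidean group $O(d)\ltimes \R^d$ is a smooth submanifold of dimension $\binom{d_p+1}{2}$ contained in the same fiber of $f$, whose tangent space at $p$ consists exactly of the trivial infinitesimal motions. Infinitesimal volume rigidity asserts that these two tangent spaces coincide; since $\mathcal{O}\cap U\subseteq M$ and $\dim\mathcal{O}=\dim M$, the intersection $\mathcal{O}\cap U$ fills a neighborhood of $p$ in $M$, and after shrinking $U$ we obtain $M\subseteq \mathcal{O}$. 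Hence every $q\in U$ with $(H,q)$ volume equivalent to $(H,p)$ is congruent to $p$, establishing volume rigidity.

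Conversely, if $(H,p)$ is not infinitesimally volume rigid, then $\dim M = d|V|-\rank R(H,p) > \binom{d_p+1}{2} = \dim \mathcal{O}$, so $\mathcal{O}$ has positive codimension in $M$ and cannot contain a neighborhood of $p$ in $M$. Consequently, there are points $q\in M$ arbitrarily close to $p$ but outside $\mathcal{O}$, yielding realisations that are volume equivalent to but not congruent to $p$, contradicting volume rigidity.

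The only ingredient that needs additional justification is the genericity claim itself: the entries of $Df(p)$ are polynomials in $p$, so $\rank Df$ is lower semicontinuous and attains its maximum on a Zariski-open set, on which every point is a regular value of $f$. This is the main (but mild) obstacle, in that one must verify that the generic rank of $R(H,p)$ genuinely equals the codimension of the fiber through $p$; the constant-scalar relationship between $R(H,p)$ and $Df(p)$ reduces this to the standard polynomial-rank argument, and the remainder of the proof is a direct transcription of Asimow-Roth.
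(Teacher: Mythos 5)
Your proof is correct and is precisely the Asimow--Roth argument that the paper invokes without writing out: the paper's entire ``proof'' consists of the remark that $f(p)$ is a regular value for generic $p$ plus a citation of \cite{AR}, and your fleshed-out version (the fiber of $f$ through $p$ is locally a manifold with tangent space $\ker R(H,p)$, to be compared with the congruence orbit of the same or smaller dimension) is exactly the intended analysis. The one imprecision, which you share with the paper, is the phrase ``regular value'': since the rows of $R(H,p)$ need not be independent, what is really used is that a generic $p$ is a regular \emph{point} (the rank of $Df$ is locally constant and maximal there), so the constant rank theorem rather than the implicit function theorem furnishes the manifold structure of the fiber --- but your closing paragraph on lower semicontinuity of the rank shows you have the correct mechanism in mind.
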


Proposition \ref{prop:equivalence} allows us to define a hypergraph $H$ 
as being {\em volume rigid in $\R^d$} if some, or equivalently every, generic realisation of $H$ in $\R^d$ is infinitesimally volume rigid.

We close this section by using Proposition \ref{prop:equivalence} to verify the special case  of Theorem \ref{thm:complete} when $n \leq d+1$. Our proof also uses a result of Lov\`asz \cite[Theorem 13]{L} which determines the spectrum of the Kneser graph $K(n,k)$.
(This is the graph with vertex set ${{[n]}\choose k}$ in which two vertices $X,Y$ are adjacent whenever $X\cap Y=\emptyset$.)

\begin{lemma} \label{lem:simplex} The complete $(k+1)$-uniform hypergraph $K^{k+1}_{n}$  is   volume rigid in $\R^{d}$ for all $1 \leq k \leq n-3\leq d-2$.
\end{lemma}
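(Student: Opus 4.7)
The plan is to invoke Proposition~\ref{prop:equivalence} and exhibit a realisation $p_0$ of $K_{k+2}^k$ at which the $\binom{k+2}{2}$ rows of the volume rigidity matrix are linearly independent. Lower-semicontinuity of rank then yields the same conclusion at a generic realisation, giving infinitesimal, and hence genuine, volume rigidity. I will treat the case $d=k+1$ first, then lift to $d>k+1$.

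For $d=k+1$, the key step is a chain-rule factorisation. For each hyperedge $\Delta_{ij}:=V\setminus\{v_i,v_j\}$, the squared volume $\Vol(p(\Delta_{ij}))^2$ is a polynomial in the $\binom{k}{2}$ squared distances $d_{ab}^2=\|p(v_a)-p(v_b)\|^2$ between vertices of $\Delta_{ij}$ (via the Cayley--Menger determinant). Hence
\[
R(K_{k+2}^k,p)=M(p)\cdot R_{bar}(p),
\]
where $R_{bar}(p)$ is the bar-joint rigidity matrix of the complete graph $K_{k+2}$ and $M(p)$ is the $\binom{k+2}{2}\times\binom{k+2}{2}$ matrix whose $(\{v_i,v_j\},\{v_a,v_b\})$-entry is $\partial\Vol(p(\Delta_{ij}))^2/\partial d_{ab}^2$, vanishing whenever $\{v_a,v_b\}\cap\{v_i,v_j\}\neq\emptyset$. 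Taking $p_0$ to be the vertices of a regular $(k+1)$-simplex in $\R^{k+1}$, the $S_{k+2}$-symmetry of the configuration forces all nonzero entries of $M(p_0)$ to equal a common constant $c$, and a short Cayley--Menger computation gives $c\neq 0$. Thus $M(p_0)=c\cdot A$, where $A$ is the adjacency matrix of the Kneser graph $K(k+2,2)$. Lov\'asz's Theorem~13 gives the spectrum of $A$ as $\binom{k}{2}$, $-(k-1)$, and $1$, all nonzero for $k\geq 2$, so $M(p_0)$ is invertible. Moreover, $R_{bar}(p_0)$ has full row rank $\binom{k+2}{2}$, since $K_{k+2}$ is infinitesimally bar-joint rigid at any affinely independent set of $k+2$ points in $\R^{k+1}$. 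Consequently $R(K_{k+2}^k,p_0)=M(p_0)R_{bar}(p_0)$ has the required full row rank.

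For $d>k+1$, a generic realisation $p$ in $\R^d$ has a $(k+1)$-dimensional affine span. Any infinitesimal volume-preserving motion decomposes as a tangential component (trivial within the span by the case $d=k+1$) plus a perpendicular component (which preserves every squared volume to first order, since perturbing perpendicular to the affine hull of any $k$-subset produces no first-order volume change). A direct calculation shows that any such perpendicular component is itself realisable as a trivial motion $v\mapsto Sp(v)+t$ in $\R^d$ with $S$ skew-symmetric, so the full motion is trivial. The main obstacle in the argument is the Cayley--Menger verification that $c\neq 0$; once this is in hand, the spectral step via Lov\'asz is routine and the lift to higher $d$ is standard.
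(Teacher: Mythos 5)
Your argument is correct in outline and rests on the same two pillars as the paper's proof --- a maximally symmetric special realisation (a regular simplex) and the nonsingularity of the adjacency matrix of the Kneser graph $K(k+2,2)$ via Lov\'asz --- but you reach the Kneser matrix by a genuinely different route. The paper computes the volume rigidity matrix directly at the realisation $p(v_i)=\be_i$ in $\R^{d+1}$ and reduces it to the Kneser adjacency matrix by explicit row and column scalings and deletions, whereas you factor $R(K_{k+2}^k,p)=M(p)\,R_{\rm bar}(p)$ through the squared-distance map, so that the Kneser support pattern of $M(p)$ appears for free and the full row rank of $R_{\rm bar}(p_0)$ is just the standard infinitesimal rigidity of the simplex $K_{k+2}$ in $\R^{k+1}$. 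This is conceptually cleaner and explains \emph{why} the Kneser graph shows up, but it costs you the extra verification that the common nonzero entry $c$ of $M(p_0)$ really is nonzero, which you explicitly leave open. That is the one genuine gap, and it is easy to close: by Cayley--Menger, $\Vol(p(\Delta_{ij}))^2$ is a homogeneous polynomial of degree $k-1$ in the $\binom{k}{2}$ squared distances among the vertices of $\Delta_{ij}$, so Euler's relation gives $\sum_{ab}\frac{\partial \Vol^2}{\partial d_{ab}^2}\,d_{ab}^2=(k-1)\Vol^2>0$ at the regular simplex; since all $\binom{k}{2}$ partials are equal there by symmetry, each equals $(k-1)\Vol^2/\bigl(\binom{k}{2}s\bigr)>0$, where $s$ is the common squared edge length. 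Your reduction from $d>k+1$ to $d=k+1$ (split an infinitesimal flex into tangential and normal parts; the normal part is automatically a flex and is trivial by a dimension count on skew maps $L_0\to L_0^{\perp}$ plus normal translations) is essentially the paper's reduction phrased differently and is fine. With the $c\neq 0$ step filled in as above, the proof is complete.
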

\begin{proof} 
We will proceed by induction on $n-k$. We first consider the base case when $n-k=3$.
Let $K_{k+3}^{k+1}=H=(V,E)$. 
Suppose that $d \geq k+3 = n$ and that $\dot{p}: V \to \R^d$ is an  infinitesimal motion of a framework $(H,p)$ for some generic realisation $p$ of $H$ in $\R^d$. Since $|V|= k+3$ it follows that $p(V)$ is contained in some $(k+2)$-dimensional affine subspace $Y$ of $\R^d$. Choose $y_0 \in Y$ and put $L={\rm span}\{y-y_0:y\in Y\}$.  Then $L$ is a $(k+2)$-dimensional linear subspace of $\R^d$  and $p':v\mapsto p(v)-y_0$ is a realisation of $H$ in $L$. Since $|V| = k+3\leq d$, there is a trivial infinitesimal motion $\dot{q}:V \to \R^d$ such that $\dot{p}(u)+\dot{q}(u) \in L$ for all $u\in V$. Now $\dot{p} + \dot{q}$ is trivial if and only if $\dot{p}$ is trivial. It follows that $(H,p)$ is volume rigid in $\R^d$ if and only if {$(H,p')$} is volume rigid in the $(k+2)$-dimensional space $L$. 
Therefore we can assume that $d=k+2 = n-1$.

Since the entries in the rigidity matrix $R(H,p)$ are rational functions of the coordinates of any realisation $p:V\to \R^d$, it will suffice to show that some realisation $p$ of $H$ in $\R^{d}$ is infinitesimally volume rigid. Let $V=\{v_1,v_2,\ldots,v_{d+1}\}$. Consider the realisation $p$ of $H$ in $\R^{d+1}$ given by $p(v_i)=\be_i$, where $\be_1,\be_2,\ldots,\be_{d+1}$ is the standard basis for $\R^{d+1}$. Since the vertices of $(H,p)$ all lie in a hyperplane of $\R^{d+1}$ and the rank of $R(H,p)$ is invariant under translations and rotations  in $\R^{d+1}$, it will suffice to show that 
\begin{equation}\label{eq:e1}
\rank R(H,p)\geq |V(H)|d-\binom{d+1}{2}=\binom{d+1}{2}.
\end{equation}

By symmetry, each hyperedge of $H$ will have the same volume $W$ in $(H,p)$. Let $M_1$ be the matrix obtained by dividing each row of $R(H,p)$ by $W$. Then, for each $\Delta\in E(H)$ and $v_i\in V$, the vector in row $\Delta$ and columns $v_i$  of $M_1$ is $\be_i-\frac{1}{k}\sum_{v_j\in \Delta-v_i}\be_j$ if $v_i\in \Delta$, and $\bf 0$ otherwise. 
 
For each $1\leq i\leq d+1$, label the columns in the $v_i$-block of columns of  $M_1$ consecutively with the ordered pairs $(h,i)$, $1\leq h\leq d+1$. Let  $M_2$ be the matrix obtained by multiplying each row of $M_1$ by $-k$ and then dividing the $(i,i)$-th column of  $M_1$  by  $-k$ for each $1\leq i\leq d+1$. Then, for each $\Delta\in E(H)$ and $v_i\in V(H)$, the vector in row $\Delta$ and columns $v_i$  of $M_2$ is  $\sum_{v_j\in \Delta}\be_j$ if  $v_i\in \Delta$, and $\bf 0$ otherwise.  
 
Finally, let $M_3$ be the matrix obtained from $M_2$ by deleting the  columns  labelled $(h,i)$ for all $1\leq i\leq h\leq k+3$. Then $M_3$ is a $\binom{d+1}{2}\times \binom{d+1}{2}$-matrix in which the entry in row $\Delta$ and column $(h,i)$ is $1$ if $\{v_h,v_i\}\subseteq \Delta$ and $h<i$, and $0$ otherwise. We can now see that $M_3$ is the adjacency matrix of the Kneser graph $K(d+1,2)$ by changing the label on row $\Delta$ to $(h,i)$ when $V(H)\sm \Delta=\{v_h,v_i\}$ and $h<i$, and then apply \cite[Theorem 13]{L} to deduce that $M_3$ is non-singular.\footnote{More precisely the Claim in the proof of \cite[Theorem 13]{L} tells us that zero is not an  eigenvalue of $M_3$.} This gives $\rank R(H,p)\geq \rank M_3=\binom{d+1}{2}$ and hence (\ref{eq:e1}) holds. This completes the proof when $k=n-3$.

We can now proceed to the induction step. Suppose that $n-k\geq 4$, $K = (V,E) = K^{k+1}_{n} $ and let $\dot{p}$ be an infinitesimal motion of $(K,p)$ for some generic $p: V \to \R^d$. Let $X$ be a $(k+2)$-subset of $V$ and choose a $(n-1)$-subset $V'\subset V$ such that $X \subset V'$. Let $K'$ be the copy of $K^{k+1}_{n-1}$ with vertex set $V'$. By induction $(K',p|_{V'})$ is volume rigid in $\R^d$. Therefore $\dot{p}$ restricts to a trivial infinitesimal motion of $(K',p|_{V'})$. In particular, $\dot{p}$ infinitesimally preserves the volume of the $(k+1)$-simplex $p(X)$. Since $X$ was an arbitrary $(k+2)$-subset of $V$, we can conclude that $\dot{p}$ is an infinitesimal motion of $(K'',p)$ where $K''$ is the copy of $K^{k+2}_{n}$ with vertex set $V$. By induction $(K'',p)$ is rigid in $\R^d$.
Hence $\dot p$ is a trivial infinitesimal motion and $(K,p)$ is infinitesimally volume rigid, as required.

\end{proof}

\section{Three rigidity lemmas}\label{sec:comb}
We will extend three fundamental lemmas for graph rigidity to hypergraph volume rigidity and apply the first lemma to complete the proof of Theorem \ref{thm:complete}. Versions of the first two lemmas (for the special case of $(d+1)$-uniform hypergraphs in $\R^d$) were previously 
given in \cite{BS,BNP}.

\subsection{A gluing lemma and the proof of Theorem \ref{thm:complete}}

The operation of gluing together two rigid graphs to construct a larger rigid graph is a standard technique in graph rigidity theory, see \cite{Wlong}. It is straightforward to extend this to hypergraph volume rigidity.

\begin{lemma}\label{lem:glue_k} Let $H_1,H_2$ be two hypergraphs which are volume rigid in $\R^{d}$ and satisfy $|V(H_1)\cap V(H_2)|\geq d$.
Then $H=H_1\cup H_2$ is  volume rigid in $\R^{d}$. 
\end{lemma}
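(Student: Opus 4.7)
The plan is to reduce to checking infinitesimal volume rigidity via Proposition~\ref{prop:equivalence}: it suffices to fix a generic realisation $p$ of $H$ in $\R^d$ and show that every infinitesimal motion $\dot p$ of $(H,p)$ is trivial. The restrictions $p_i := p|_{V(H_i)}$ are themselves generic realisations of $H_i$, so by hypothesis each $(H_i,p_i)$ is infinitesimally volume rigid.

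Next I would use the block structure of the volume rigidity matrix. The row of $R(H,p)$ corresponding to a hyperedge $\Delta\in E(H_i)$ has a zero block in every column block $v\notin \Delta \subseteq V(H_i)$, so this row is determined by its restriction to the columns indexed by $V(H_i)$ and coincides with the corresponding row of $R(H_i,p_i)$. Consequently $\dot p_i := \dot p|_{V(H_i)}$ lies in the kernel of $R(H_i,p_i)$, and by infinitesimal rigidity there exist a skew-symmetric matrix $S_i\in\R^{d\times d}$ and $t_i\in\R^d$ with $\dot p(v)=S_i\,p(v)+t_i$ for every $v\in V(H_i)$, $i=1,2$.

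To glue, set $U=S_1-S_2$ and $w=t_1-t_2$, so the affine map $v\mapsto Uv+w$ vanishes on $p(V(H_1)\cap V(H_2))$. Since $p$ is generic and $|V(H_1)\cap V(H_2)|\geq d$, these common image points are in general position and their affine hull has dimension at least $d-1$. Translating, the kernel of $U$ contains a $(d-1)$-dimensional linear subspace, giving $\rank U\leq 1$; because $U$ is skew-symmetric and therefore of even rank, $U=0$, and then $w=0$ follows by evaluation at any common vertex. Thus $S_1=S_2$ and $t_1=t_2$, so $\dot p$ is globally the restriction of a single trivial motion to $V(H)=V(H_1)\cup V(H_2)$.

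The main subtlety I anticipate is the verification that $p_i$ is generic and in particular that any $d$ of the common vertices are affinely independent; this is immediate from the genericity of $p$. A secondary issue is the degenerate case $|V(H)|\leq d$, where $p(V(H))$ does not affinely span $\R^d$; this can be handled exactly as in the proof of Lemma~\ref{lem:simplex}, by passing to an affine subspace of dimension equal to $d_p=\dim\aff(p(V))$ and noting that the dimension of trivial motions adjusts to $\binom{d_p+1}{2}$ while the kernel-of-$U$ argument still forces $U=0$ on $\aff(p(V))$.
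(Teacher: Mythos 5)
Your proposal is correct and follows essentially the same route as the paper: restrict a generic infinitesimal motion to each $H_i$, invoke infinitesimal volume rigidity of the pieces, and conclude that the two trivial motions agree because they coincide on at least $d$ points in general position. The paper leaves that last gluing step as a known fact, whereas you spell out the skew-symmetric rank argument showing $S_1=S_2$ and $t_1=t_2$; this is a harmless (and welcome) elaboration, not a different method.
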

\begin{proof} 
Choose a generic realisation $p$ of $H$ in $\R^{d}$ and 
let $\dot{p}$ be an infinitesimal motion of  $(H,p)$. Then $\dot{p}_i=\dot{p}|_{V(H_i)}$ is an infinitesimal  motion of $(H_i,p|_{H_i})$. The hypothesis that $H_i$ is volume rigid in $\R^{d}$ now implies that $\dot{p}_i$ is trivial infinitesimal motion of $(H_i,p|_{H_i})$. So, for $i=1,2$, $\dot{p_i}(v) = A_iv+t_i$ for some skew symmetric $d\times d$ matrix $A_i$ and $t_i \in \R^d$ and all $v \in V(H_i)$.  Since $\dot{p}_1$ and $\dot{p}_2$ agree on at least $d$ affinely independent points in $\R^{d}$, it follows that $A_1 = A_2$ and $t_1 = t_2$ and so $p$ is a trivial infinitesimal motion of $(H,p)$. Hence $(H,p)$ is volume rigid.
\end{proof}

\begin{proof}[Proof of Theorem \ref{thm:complete}]
By Lemma \ref{lem:simplex} we can assume that $n\geq d+2$. We proceed by induction on $n$. Choose $ H,K\subset K^{k+1}_n$, where $H$ and $K$ are both copies of $K^{k+1}_{n-1}$ and $|V(H) \cap V(K)| = n-2\geq d$. By induction both $H$ and $K$ are volume rigid in $\R^d$ and so Lemma \ref{lem:glue_k} implies that $H\cup K$ is a volume rigid spanning subgraph of $K^{k+1}_n$.
\end{proof}

\subsection{A vertex splitting lemma}
\label{sec:split}
Given a hypergraph $H$ and two vertices $u,v$ of $H$, we can transform $H$ into a new hypergraph $H/uv$ by deleting every hyperedge which contains  $\{u,v\}$ and replacing every hyperedge $\Delta$ which contains $u$ and not $v$ by $\Delta-u+v$ (when $\Delta-u+v\not\in \cS$). We will say that 
$H/uv$ is obtained from $H$ by {\em contracting $u$ onto $v$} and that  
$H$ can be obtained from $H/uv$ by {\em splitting $u$ from $v$}. Note that the contraction operation is well defined but we can obtain different hypergraphs by splitting $u$ from $v$, depending on our choice for the sets of hyperedges incident to $u$ and $v$. The graphical version of these operations was first used to study the rigidity of bar-joint frameworks by Whiteley \cite{Wvsplit}, who gave sufficient conditions for vertex splitting to preserve rigidity. His vertex splitting lemmas \cite[Proposition 10]{Wvsplit} and \cite[Theorem 9.3.7]{Wlong} have many applications in the study of bar-joint rigidity. In particular, \cite[Proposition 10]{Wvsplit} is an essential component of Fogelsanger's proof \cite{F}  that the 1-skeleton of a connected simplicial $(d-1)$-manifold is generically rigid in $\R^d$ when $d \geq 3$. 

Borcea and Streinu \cite[Proposition 3]{BS} and Bulavka et al \cite[Lemma 3.1]{BNP} give a sufficient condition for vertex splitting to preserve the property that the volume rigidity matrix of a generic realisation of a $(d+1)$-uniform hypergraph has maximum possible rank in $\R^d$.
We will adapt Whiteley's original proof technique to obtain a similar result for an arbitrary hypergraph $H=(V,E)$ and $u,v\in V$. It gives a common generalisation of both of Whiteley's original vertex splitting lemmas to hypergraphs. Our sufficient condition for vertex splitting to preserve volume rigidity is in terms of the rank of a matrix $A_{uv}(H,p,\bd)$, defined for a given realisation $p$ of the hypergraph $H/uv$ obtained by contracting $u$ onto $v$, and a given direction $\bd\in \R^d$.   

Let $E_u$ and $E_v$ be the sets of hyperedges of $H$ which contain $u$ and $v$, respectively. Put $E_{uv}=E_u\cap E_v$, 
$E_v^u=\{\Delta\in E_v\sm E_u:\Delta-v+u\in E_u\}$, 
and $E_u^v=\{\Delta\in E_u\sm E_v:\Delta-u+v\in E_v\}$. Recall that for each $x\in \Delta\in E(H/uv)$, $p(x)^\Delta$ denotes the projection of $p(x)$ onto the affine subspace of $\R^d$ spanned by $p(\Delta-x)$. For each $\Delta\in E_{uv}$ let $\bd_\Delta$ denote the projection of $\bd$ onto the orthogonal complement of the linear subspace of $\R^d$ spanned by $\{p(x)-p(v):x\in \Delta-u\}$ (so, in particular, $\bd_\Delta=\bd$ when $|\Delta|=2$). Finally,
let $A_{uv}(H,p,\bd)$ be the matrix with rows given by the vectors in 
$$\{\bd_\Delta:\Delta\in E_{uv}\}\cup \{p(v)-p(v)^\Delta:\Delta\in E_v^u\}.$$
Note that the entries of $A_{uv}(H,p,\mathbf{d)}$ are rational functions (with coefficients in $\mathbb Q$) of the coordinates of $p$ and $\mathbf{d}$. This follows from the following lemma.
\begin{lemma}
    \label{lem:rational}
    Let $w_1,\dots,w_l \in \R^d$ be affinely independent points and let $\pi:\R^d \to \aff(w_1,\dots,w_l)$ be the orthogonal projection. Then, for each $u\in \R^d$, the coordinates of $\pi(u)$ are expressible as rational functions (with coefficients in $\mathbb Q$) of the coordinates of $w_1,\dots,w_l,u$.
\end{lemma}
\begin{proof}
    Write $\pi(u) = \sum_{i=1}^l c_iw_i$ for some $c_1,\dots, c_l\in \mathbb{R}$. Then $(c_1,\dots,c_l)$ is the unique solution to the following system of equations, that is linear in variables $x_1,\dots,x_l$. 
    $$ \begin{array}{rlcl}
    \sum_{i=1}^l x_i & = & 1 &\\
    \langle u -\sum_{i=1}^lx_iw_i, w_1 -w_s \rangle & = & 0 & \text{ for }s = 2,\dots,l 
    \end{array}
    $$
    The lemma now follows from Cramer's Rule.
\end{proof}

    \begin{lemma}\label{lem:vsplit_kface} Let $H=(V,E)$ be a hypergraph,
    $u,v$ be distinct vertices of $H$ and $q$ be an infinitesimally volume rigid realisation of $H/uv$ in $\R^d$. Suppose  that $\rank A_{uv}(H,q,\bd)=d$ for some $\bd\in \R^d$. Then $H$ is  volume rigid in $\R^d$. 
\end{lemma}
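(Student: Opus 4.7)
The plan is to adapt Whiteley's perturbation strategy for vertex splitting in bar-joint frameworks \cite{Wvsplit}. Define $p_t:V\to\R^d$ by $p_t(u) = q(v)+t\bd$, $p_t(v)=q(v)$, and $p_t(x) = q(x)$ for all $x\in V\setminus\{u,v\}$. By Proposition~\ref{prop:equivalence} together with lower semi-continuity of rank, it suffices to show that $\rank R(H,p_t) \geq d|V| - \binom{d+1}{2}$ for all sufficiently small $t>0$.

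I will first determine the leading order behaviour of each row of $R(H,p_t)$ as $t\to 0$. Rows indexed by $\Delta\in E\setminus E_{uv}$ depend continuously on $t$ and have well-defined limits at $p_0$. For $\Delta\in E_{uv}$, both $u,v\in\Delta$, so the simplex $p_t(\Delta-x)$ with $x\in\Delta\setminus\{u,v\}$ contains the pair $p_t(u),p_t(v)$ at distance $t\|\bd\|$; by the exterior algebra formula (\ref{eq:vol}), $\Vol(p_t(\Delta-x))^2 = O(t^2)$, so the row block at $x$ is $O(t^2)$. Using that $q(v)\in\aff(q(\Delta-u))$, a direct projection computation yields $p_t(u) - p_t(u)^\Delta = t\bd_\Delta$ and $p_t(v) - p_t(v)^\Delta = -t\bd_\Delta + O(t^2)$, so the blocks at $u$ and $v$ equal $\pm t\bd_\Delta\Vol(q(\Delta-u))^2+O(t^2)$. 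Dividing each $E_{uv}$ row of $R(H,p_t)$ by $t$ (a rank preserving operation) and passing to the limit produces a matrix $\tilde R$ with $\rank R(H,p_t)\geq\rank\tilde R$ for all small $t$.

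To compute $\rank\tilde R$, let $\dot p\in\ker\tilde R$ and put $w:=\dot p(u)-\dot p(v)$. Each $E_{uv}$ row of $\tilde R$ yields $\bd_\Delta\cdot w=0$. For each $\Delta'\in E_v^u$, the rows of $\tilde R$ indexed by $\Delta'$ and by its partner $\Delta:=\Delta'-v+u\in E_u\setminus E_v$ share every coefficient (using $p_0(u)=q(v)$), except that the $\Delta$ row carries $\dot p(u)$ in the $u$-block while the $\Delta'$ row carries $\dot p(v)$ in the $v$-block; subtracting yields $(q(v)-q(v)^{\Delta'})\cdot w=0$. Together these constraints say that $w$ is orthogonal to every row of $A_{uv}(H,q,\bd)$, so the rank hypothesis forces $w=0$. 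Now set $\dot q:=\dot p|_{V-u}$; with $\dot p(u)=\dot p(v)$, every remaining row of $\tilde R$ (indexed by $\Delta\in E\setminus E_{uv}$) translates into the rigidity equation of $(H/uv,q)$ for $\Delta$ when $u\notin\Delta$, or for $\Delta-u+v$ when $u\in\Delta$. Hence $\dot q$ is an infinitesimal volume motion of $(H/uv,q)$, and is trivial by hypothesis; extending via $\dot p(u)=\dot p(v)$ gives a trivial motion of $(H,p_0)$, so $\dim\ker\tilde R = \binom{d+1}{2}$ and $\rank\tilde R = d|V|-\binom{d+1}{2}$ as required.

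The most delicate step will be the leading order expansion of the rows indexed by $\Delta\in E_{uv}$: computing the first order term of the projection $p_t(v)^\Delta$ onto the $t$-dependent affine subspace $\aff(p_t(\Delta-v))$, and verifying via (\ref{eq:vol}) the quadratic vanishing of the degenerate simplex volumes $\Vol(p_t(\Delta-x))^2$ for $x\in\Delta\setminus\{u,v\}$.
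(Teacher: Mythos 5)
Your proposal is correct and follows essentially the same route as the paper: place $u$ at $q(v)$, perturb along $\bd$, rescale the degenerate $E_{uv}$-rows by $1/t$, and extract the rank of the limit matrix from the two hypotheses on $R(H/uv,q)$ and $A_{uv}(H,q,\bd)$. The only differences are presentational: the paper gets the rank of the limit matrix by row/column operations exhibiting a block-triangular form with diagonal blocks $A_{uv}(H,q,\bd)$ and $R(H/uv,q)$ rather than by your (equivalent) kernel computation, and it obtains the limit of the $v$-block in the $E_{uv}$-rows not by expanding the moving projection $p_t(v)^{\Delta}$ --- the step you flag as delicate --- but for free from the observation that the blocks in each row of a volume rigidity matrix sum to zero by translation invariance.
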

\begin{proof} 
We construct a realisation $p$ of $H$ in $\R^{d}$ by putting $p(w)=q(w)$ for all $w\in V-u$ and $p(u)=q(v)$. 
Recall that,  for any  $\Delta\in E$ and  $w\in V$, the entries in row $\Delta$ and columns $w$ of the rigidity matrix $R(H,p)$ are given by 
 $$g(w,\Delta,p):=
 \begin{cases}
 (p(w)-p(w)^\Delta) \Vol( p(\Delta-w))^2 & \mbox{when $w\in \Delta$ and $|\Delta|\geq 3$}\\  
 p(w)-p(w)^\Delta & \mbox{when $w\in \Delta$ and  $|\Delta|=2$},
 \end{cases}
 $$
and are equal to zero when $w\not\in \Delta$.
Since $p(u)=p(v)$, 
 for each $\Delta_i\in E_{uv}$,
  $p(w)-p(w)^{\Delta_i}={\bf 0}$ for all $w\in \{u,v\}$ and $\Vol (p(\Delta_i-w))=0$ for all $w\in \Delta_i\sm \{u,v\}$.
  Hence the row of $R(H,p)$ indexed by $\Delta_i\in E_{uv}$ is zero.
Thus   $R(H,p)$ has the following form:
\renewcommand{\arraystretch}{1.3}
\[
\kbordermatrix{
 & u & v &  \\
 & \vdots & \vdots & \vdots \\
\Delta_i\in E_{uv} & 0 & 0 & 0 \\
 & \vdots & \vdots & \vdots \\\cline{2-4}
  & \vdots & \vdots & \vdots \\
\Delta'_i\in E_u^v & g(u,\Delta'_i,p) & 0 &\ast \\
 & \vdots & \vdots & \vdots \\\cline{2-4}
 & \vdots & \vdots & \vdots \\
\Delta''_i\in E_v^u & 0 & g(v,\Delta''_i,p) & \ast \\
 & \vdots & \vdots & \vdots \\\cline{2-4}
 & \vdots & \vdots & \vdots \\
\Delta'''_i\in E_u\setminus (E_{uv}\cup E_u^v) & g(u,\Delta'''_i,p) & 0 & \ast \\
 & \vdots & \vdots & \vdots \\\cline{2-4}
 & \vdots & \vdots & \vdots \\
\Delta''''_i\in E_v\setminus (E_{uv}\cup E_v^u) & 0 & g(v,\Delta''''_i,p) & \ast \\
 & \vdots & \vdots & \vdots \\\cline{2-4} 
E(H-u-v) & 0 & 0 & R(H-u-v,p|_{V-u-v})\\
},\]
where 
$H-u-v$ is the hypergraph obtained from $H$ by deleting $u,v$ and  all hyperedges containing $u$ or $v$.
By the definition of $E_u^v$ and $E_v^u$, we may denote 
$E_u^v=\{\Delta'_1,\dots, \Delta'_s\}$ and 
 $E_v^u=\{\Delta''_1,\dots, \Delta''_s\}$ 
and suppose $\Delta''_i = \Delta'_i-u+v$ for each $i$ with $1\leq i\leq s$. 
Then, since $p(u)=p(v)$, we  have
\begin{equation}\label{eq:vsplitting1}
g(u,\Delta_i',p)=g(v,\Delta_{i}'',p) \text{ and } g(w,\Delta_i',p)=g(w,\Delta_{i}'',p) 
\text{ for all  $w\in \Delta'_i-u$ and all $1\leq i\leq s$}.
\end{equation}

Construct a new matrix $R^*$ from $R(H,p)$ by replacing  the zero row indexed by $\Delta_i\in E_{uv}$ by a row with $\bd_{\Delta_i}$ in the columns indexed by $u$,  $-\bd_{\Delta_i}$ in the columns indexed by $v$ and zeros elsewhere, 
i.e., the top row-block of $E_{uv}$ in $R(H,p)$ is replaced with  
\[
 \kbordermatrix{
  & u & v &  \\
  \vdots & \vdots & \vdots &\vdots\\
  \Delta_i\in E_{uv}  & \bd_{\Delta_{i}} & -\bd_{\Delta_{i}} & \hspace{3em} {\bf 0} \hspace{3em}\\
  \vdots & \vdots & \vdots &\vdots
   }.
   \]
In $R^*$, we can now add each column labelled by $u$ to the corresponding column labelled by $v$, then subtract the row labelled by $\Delta_i''$ from the row labelled by $\Delta_i'$ for all  $\Delta_i'\in E_u^v$, to deduce that the rank of  $R^*$ is 
equal to the rank of the following matrix:
\[
\kbordermatrix{
 & u & v &  \\
 & \vdots & \vdots & \vdots \\
\Delta_i\in E_{uv} & d_{\Delta_i} & 0 & 0 \\
 & \vdots & \vdots & \vdots \\\cline{2-4}
  & \vdots & \vdots & \vdots \\
\Delta'_i\in E_u^v & g(u,\Delta'_i,p) & 0 & 0 \\
 & \vdots & \vdots & \vdots \\\cline{2-4}
 & \vdots & \vdots & \vdots \\
\Delta''_i\in E_v^u & 0 & g(v,\Delta''_i,p) & \ast \\
 & \vdots & \vdots & \vdots \\\cline{2-4}
 & \vdots & \vdots & \vdots \\
\Delta'''_i\in E_u\setminus (E_{uv}\cup E_u^v) & g(u,\Delta'''_i,p) & g(u,\Delta'''_i,p) & \ast \\
 & \vdots & \vdots & \vdots \\\cline{2-4}
 & \vdots & \vdots & \vdots \\
\Delta''''_i\in E_v\setminus (E_{uv}\cup E_v^u) & 0 & g(v,\Delta''''_i,p) & \ast \\
 & \vdots & \vdots & \vdots \\\cline{2-4} 
E(H-u-v) & 0 & 0 & R(H-u-v,p|_{V-u-v})\\
},
\]
where we notice that the entries of $V(H)\setminus \{u\}$ in the row block of $E_u^v$ become zero by 
(\ref{eq:vsplitting1}).

Since $p(u)=p(v)$, we have $g(u,\Delta'''_i,p)=g(v,\Delta'''_i-u+v,p)$ for 
$\Delta'''_i\in E_u\setminus (E_{uv}\cup E_u^v)$.
Hence, the bottom-right block
\[
\kbordermatrix{
 & v &  \\
 & \vdots & \vdots  \\
 \Delta''_i\in E_v^u &   g(v,\Delta''_i,p) & \ast \\
  & \vdots  & \vdots \\\cline{2-3}
  & \vdots  & \vdots \\
 \Delta'''_i\in E_u\setminus (E_{uv}\cup E_u^v)  & g(u,\Delta'''_i,p) & \ast \\
 &  \vdots & \vdots \\\cline{2-3}
  &  \vdots & \vdots \\
 \Delta''''_i\in E_v\setminus (E_{uv}\cup E_v^u)  & g(v,\Delta''''_i,p) & \ast \\
  &  \vdots & \vdots \\\cline{2-3} 
 E(H-u-v)  & 0 & R(H-u-v,p|_{V-u-v}) 
},
\]
is exactly equal to $R(H/uv,q)$ by identifying $\Delta'''_i$ with $\Delta'''_i-u+v$
for $\Delta'''_i\in E_u\setminus (E_{uv}\cup E_u^v)$.
Thus, the rank of $R^*$ is equal to the rank of the matrix
\[
\kbordermatrix{
 & u  &  \\
 & \vdots & \vdots \\
 \Delta_i\in E_{uv} & \bd_{\Delta_i} & {\bf 0} \\
 & \vdots &\vdots\\\cline{2-3}
 & \vdots & \vdots \\
\Delta'_i\in E_{v}^u & g(u,\Delta'_i,p) & {\bf 0}  \\
 & \vdots & \vdots \\\cline{2-3}
& *  & R(H/uv,q)
}
 =\begin{bmatrix}
 A_{uv}(H,q,\bd)&{\bf 0}\\
 *&R(H/uv,q)
 \end{bmatrix} \,.
\renewcommand{\arraystretch}{1}
\]
Hence $\rank R^*\geq \rank R(H/uv,q)+\rank A_{uv}(H,q,\bd)=d(|V|-1)-{{d+1}\choose2}+d=d|V|-{{d+1}\choose2}$ since $(H/uv,q)$ is infinitesimally volume rigid and $\rank A_{uv}(H,q,\bd)=d$.

Define $p_t:V\to \R^{d}$ for $t\in \R$ 
by putting  $p_t(u)=p(v)+t\bd$ and $p_t(w)=p(w)$ for all $w\neq u$. 
Then $p_t(u)-
p_t(u)^{\Delta_i}=t\bd_{\Delta_i}$ for each 
 $\Delta_i\in E_{uv}$. 
We will complete the proof by showing that, after a suitable scaling, 
the rows of 
$R(H,p_t)$ become arbitrarily close to  the rows of $R^*$ when $t$ is sufficiently close to zero.

Consider the matrix $R_t$ obtained from $R(H,p_t)$ by dividing the row
labelled by each $\Delta_{i}\in E_{uv}$ by $t\Vol (p(\Delta_{i}-u))^2$ when 
$|\Delta_i|\geq 3$ and by $t$  when $|\Delta_i|=2$. We will show that $\lim_{t\to 
0}R_t=R^*$. It is easy to see that this holds for all rows of  $R_t$ and $R^*$ other 
than the rows labelled by the hyperedges $\Delta_{i}\in E_{uv}$ with $|\Delta_i|\geq 3$.

The vector in the row of $R_t$ labelled by any such $\Delta_{i}$ and the columns labelled by $u$ is 
$$\frac{g(u,\Delta_{i},p_t)}{t\Vol (p(\Delta_{i}-u))^2}=\frac{(p_t(u)-p_t(u)^{\Delta_{i}})\Vol (p_t(\Delta_{i}-u))^2}{t\Vol(p(\Delta_{i}-u))^2}=\frac{p_t(u)-p_t(u)^{\Delta_{i}}}{t}=\bd_{\Delta_i},
$$
since $p_t(\Delta_{i}-u)=p(\Delta_{i}-u)$. 
Thus the  entries in the rows of $\lim_{t\to 0}R_t$ and $R^*$ labelled by $\Delta_{i}$ and the columns labelled by $u$ are the same.

For each $w\in \Delta_i-u-v$, the vector in the row of $R_t$ labelled by $\Delta_{i}$ and the columns labelled by $w$ is 
\begin{align*}
    \frac{g(w,\Delta_{i},p_t)}{t\Vol (p(\Delta_{i}-u))^2}&=\frac{(p_t(w)-p_t(w)^{\Delta_{i}})\Vol (p_t(\Delta_{i}-w))^2}{t\Vol (p(\Delta_{i}-u))^2}\\
    &=\frac{(p(w)-p_1(w)^{\Delta_{i}})}{\Vol (p(\Delta_{i}-u))^2}\,\frac{\Vol (p_t(\Delta_{i}-w))^2}{t},
\end{align*}
since $p_t(w)=p(w)$ and $p_t(w)^{\Delta_{i}}=p_1(w)^{\Delta_{i}}$ (because the affine span of $p_t(\Delta_{i}-w)$ is the same for all $t\neq 0$). We can now use the fact that  
\begin{align*}
(k-2)!\Vol (p_t(\Delta_{i}-w))^2&=\|p_t(u)-p_t(u)^{\Delta_{i}-w}\|^2\Vol (p_t(\Delta_{i}-w-u))^2\\
&=t^2\|p_1(u)-p_1(u)^{\Delta_{i}-w}\|^2\Vol (p(\Delta_{i}-w-u))^2
    \end{align*}
to deduce that $\lim_{t\to 0} \frac{g(w,\Delta_{i},p_t)}{t\Vol p(\Delta_{i}-u)^2}=\bf 0$. Thus the  entries in the rows of $\lim_{t\to 0}R_t$ and $R^*$ labelled by $\Delta_{i}$ and the columns labelled by $w$ are all zero.

It remains to show that entries in the row of $R_t$ labelled by $\Delta_{i}$ and the columns labelled by $v$ converge to $-\bd_{\Delta_i}$. This follows from the  previous two paragraphs and the observation that the sum of the vectors labelled by the vertices of $V$ in each row of $R(H,p_t)$  is equal to the zero vector (since any translation of $\R^{d}$ preserves the volume of every hyperedge in $H$).

Hence $\lim_{t\to 0}R_t=R^*$.
We can now choose a $t>0$ such that $\rank R_t=\rank R^*$. This gives
$
    \rank R(H,p_t)=\rank R_t=\rank R^*\geq d|V|-{{d+1}\choose2}
$
so $(H,p_t)$ is infinitesimally volume rigid. Since the rank of the volume rigidity matrix will be  maximised at any generic realisation, this implies that $H$ is volume rigid in $\R^d$.
\end{proof}

In our particular applications in Section \ref{sec:jface rigid}, we verify the full rank condition on $A_{uv}(H,q,\bd)$ given in Lemma~\ref{lem:vsplit_kface} by exhibiting special realisations which have full rank. It would be useful to have a simple combinatorial  condition which guaranteed that $A_{uv}(H,q,\bd)$ had full rank.

\subsection{A coning lemma}
We will obtain a sufficient condition for the addition of a new vertex and a set of new hyperedges to a hypergraph which is volume rigid in $\R^d$, to result in a hypergraph which is volume rigid in $\R^{d+1}$. Our result extends Whiteley's coning lemma for bar-joint rigidity of graphs to volume rigidity of hypergraphs.

Given a graph $G=(V,E)$ and a vertex $v\not\in V$,
the {\em cone} of $G$ is the graph
$G*v$ obtained by adding the new vertex $v$ and all edges from $v$ to $V$.
A fundamental result in bar-joint rigidity due to Whiteley \cite{Wcone} tells us that 
$G*v$ is rigid in $\mathbb{R}^{d+1}$
if and only if $G$ is  rigid in $\mathbb{R}^d$.
Whiteley's proof technique for sufficiency is to start with an infinitesimally  rigid realization $p$  of $G$ in $\mathbb{R}^{d}$ and extend this to a realisation $p^*$ of $G*v$ in $\R^{d+1}$ by putting the {\em cone vertex} $v$ on the new coordinate axis.
He then sends the cone vertex to infinity along this axis and shows that the rigidity matrix of $(G*v,p^*)$ converges
(after an appropriate scaling and reordering of the columns so that the first $n$ columns correspond to the $(d+1)$-coordinates of the entries in the standard rigidity matrix of $(G*v,p^*)$) to the block-diagonal form
\[
\kbordermatrix{
 & v & V \\
 & I_n & 0 \\
 & 0 & R(G,p)
},
\]
where $I_n$ denotes the identity matrix of size $|V|=n$ and $R(G,p)$ denotes the  rigidity matrix of the original framework. 
This block-diagonal form enables us to confirm the existence of an infinitesimally rigid realization of $G*v$ in $\R^{d+1}$.

We shall extend Whiteley's proof technique to volume rigidity of hypergraphs.
In this case, the top-left block of the above block-diagonal form becomes more complicated so we will begin by defining it separately.

Let $H=(V,E)$ be a hypergraph with hyperedges of size at most $d+1$ and $p:V\rightarrow \mathbb{R}^d$ be a point configuration such that (the multiset) $p(\Delta)$ is affinely independent in $\mathbb{R}^d$ for all $\Delta\in E$.
For each $\Delta\in E$, let $p_{\Delta}$ be the projection of the origin to the affine span of $p(\Delta)$.
Since $p(\Delta)$ is affinely independent,
$p_{\Delta}=\sum_{x\in \Delta}\alpha_x p(x)$ for  unique scalars $\alpha_x\in \R$ with
$\sum_{x\in \Delta}\alpha_x=1$.  
Let $A(H,p)$ be the $|E|\times |V|$ matrix in which the entry in row $\Delta\in E$ and column $x\in V$ is $\alpha_x$ if $x\in \Delta$ and is zero otherwise. Let ${\cal A}_d(H)$ be the row matroid of $ A(H,p)$ for any generic $p$. This matroid is uniquely defined since, by Lemma \ref{lem:rational},  the entries of $ A(H,p)$ are rational functions of the coordinates of $p$. 

Recall that, for $w\in V$, $E_w$ denotes the set of hyperedges containing $w$.
We denote  the hypergraph $(V-w,\{\Delta-w: \Delta\in E_w\})$ by $H_w$. Note that $H_w$ has $|V|-1$ vertices so $\rank\cA_d(H_w)\leq |V|-1$.

\begin{lemma}\label{lem:v_add} Let $d\geq 3$ be an integer, $H=(V,E)$ be a hypergraph with hyperedges of size at most $d+1$
and $w\in V$. 
Suppose  that $H-w$ is  volume rigid in $\R^{d}$ and $\rank \cA_d(H_w)=|V|-1$. Then $H$ is  volume rigid in $\R^{d+1}$. 
\end{lemma}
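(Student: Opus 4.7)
The plan is to follow the philosophy of Whiteley's coning argument by exhibiting a specific infinitesimally volume-rigid realisation of $H$ in $\R^{d+1}$. Choose a generic realisation $p:V-w\to \R^d$ of $H-w$ (infinitesimally volume rigid by Proposition~\ref{prop:equivalence}) and form its cone lift $p^{*}:V\to \R^{d+1}$ by setting $p^{*}(w)=\mathbf{0}$ and $p^{*}(v)=(p(v),1)$ for $v\in V-w$. Since the rank of the volume rigidity matrix attains its generic maximum on a Zariski-dense open set, establishing that $(H,p^{*})$ is infinitesimally volume rigid will suffice to conclude that $H$ is volume rigid in $\R^{d+1}$.

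Write any infinitesimal motion of $(H,p^{*})$ as $\dot p^{*}=(\dot p,\dot h):V\to \R^{d}\times \R$. For each $\Delta\in E(H-w)$ the simplex $p^{*}(\Delta)$ lies entirely in the hyperplane $\{x_{d+1}=1\}$, so vertical coordinates drop out and the corresponding rigidity equation coincides with that of $(H-w,p)$. Volume rigidity of $H-w$ therefore forces $\dot p|_{V-w}$ to be a trivial motion of $\R^d$, and by subtracting its natural extension as a trivial motion of $\R^{d+1}$ (acting by zero on the vertical coordinate) we may assume $\dot p(v)=\mathbf{0}$ for all $v\in V-w$, while $\dot p(w)$ and all $\dot h(v)$ remain unconstrained.

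The technical heart of the proof is the analysis of the rigidity equation for each $\Delta\in E_w$. Taking $p^{*}(w)=\mathbf{0}$ as the base point, the Gram matrix of the edge vectors $(p(v),1)$, $v\in\Delta-w$, is $G_{\Delta-w}+\mathbf{1}\mathbf{1}^{\top}$, where $G_{\Delta-w}$ denotes the Gram matrix of $\{p(v):v\in\Delta-w\}$. Differentiating $\det(G_{\Delta-w}+\mathbf{1}\mathbf{1}^{\top})$ via the adjugate, applying the Sherman--Morrison update identity $\mathbf{1}^{\top}(G_{\Delta-w}+\mathbf{1}\mathbf{1}^{\top})^{-1}\mathbf{1}=1/(1+\|p_{\Delta-w}\|^{2})$ together with the barycentric identity $\alpha_{\Delta}=\|p_{\Delta-w}\|^{2}\,G_{\Delta-w}^{-1}\mathbf{1}$ (where $\alpha_{\Delta}$ is the row of $A(H_w,p)$ indexed by $\Delta-w$), and cancelling the non-vanishing common factor $\det(G_{\Delta-w}+\mathbf{1}\mathbf{1}^{\top})/(1+\|p_{\Delta-w}\|^{2})$, the equation collapses (under the normalisation $\dot p|_{V-w}=\mathbf{0}$) to the clean form
\[
\sum_{v\in\Delta-w}\alpha_{\Delta,v}\,\dot h(v)=\dot h(w)+p_{\Delta-w}^{\top}\dot p(w).
\]
Assembling these relations across all $\Delta\in E_w$ yields the matrix system $A(H_w,p)\,\dot h|_{V-w}=\dot h(w)\,\mathbf{1}+P^{*}\,\dot p(w)$, where $P^{*}$ is the $|E_w|\times d$ matrix with rows $p_{\Delta-w}^{\top}$.

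Two identities then close the argument. First, every row of $A(H_w,p)$ sums to $1$, so $A(H_w,p)\mathbf{1}=\mathbf{1}$ and hence $\mathbf{1}\in\mathrm{col}\,A(H_w,p)$. Second, the definition $p_{\Delta-w}=\sum_{v}\alpha_{\Delta,v}p(v)$ yields $P^{*}=A(H_w,p)\,P^{\top}$, where $P$ is the $d\times(|V|-1)$ matrix whose columns are $p(v)$; so every column of $P^{*}$ also lies in $\mathrm{col}\,A(H_w,p)$. Consequently the block matrix $[\,A(H_w,p)\mid -\mathbf{1}\mid -P^{*}\,]$ has rank equal to $\rank A(H_w,p)=\rank\cA_d(H_w)=|V|-1$ by hypothesis, and so its nullity is exactly $(|V|-1)+1+d-(|V|-1)=d+1$. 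A direct check shows that the $d+1$ remaining trivial motions of $\R^{d+1}$ which are compatible with $\dot p|_{V-w}=\mathbf{0}$ -- namely the vertical translation together with the $d$-parameter family of mixed horizontal--vertical rotations, each offset by a horizontal translation to kill its induced motion on $V-w$ -- are linearly independent and lie in this null space, so they fill it. Hence every infinitesimal motion of $(H,p^{*})$ is trivial, which completes the proof. The main obstacle is the adjugate simplification in the third paragraph: it is precisely the mechanism by which the combinatorial hypothesis $\rank\cA_d(H_w)=|V|-1$ gets translated into the right linear-algebraic dimension count.
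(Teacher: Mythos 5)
Your argument is correct, but it reaches the conclusion by a genuinely different route from the paper's. The paper follows Whiteley's original coning technique literally: it places $w$ at $(\mathbf{0},t)$ over a realisation of $V-w$ in the hyperplane $x_{d+1}=0$, sends $t\to\infty$, and performs a careful asymptotic analysis (the similar-triangles identity and the limits in Claim~\ref{clm:limit}) to show that a rescaled rigidity matrix converges to a block-triangular matrix whose relevant block is $DA(H_w,q|_{V-w})$ for a nonsingular diagonal $D$; the rank hypothesis then yields the required rank at some large finite $t$. You instead work with the projectively equivalent \emph{finite} configuration ($w$ at the origin, $V-w$ lifted to the hyperplane $x_{d+1}=1$), compute the constraint equations exactly via the Gram matrix $G_{\Delta-w}+\mathbf{1}\mathbf{1}^{\top}$ and Sherman--Morrison, and argue on the kernel rather than the row space. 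The identity you extract --- that the vertical component of the entry of $R(H,p^{*})$ in row $\Delta\in E_w$ and column $v\in\Delta-w$ equals $\alpha_{\Delta,v}\Vol(p(\Delta-w))^{2}$ --- is the exact, limit-free counterpart of Claim~\ref{clm:limit}, and your observation that $\mathbf{1}=A(H_w,p)\mathbf{1}$ and $P^{*}=A(H_w,p)P^{\top}$ both lie in the column space of $A(H_w,p)$ is precisely where the hypothesis $\rank \cA_d(H_w)=|V|-1$ does the same work as in the paper. Your version buys an exact computation with no limiting process (so no need to argue that rank is preserved near the limit), at the cost of heavier per-hyperedge linear algebra. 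Two small points to tidy up: as the paper does, note that hyperedges of size two in $E_w$ need a separate (trivial) check, since the Gram-matrix recursion you invoke assumes $|\Delta|\ge 3$ (your final equation does still hold verbatim with $\alpha_{\Delta,v}=1$ and $p_{\Delta-w}=p(v)$); and the concluding ``direct check'' should be spelled out --- the normalised trivial motions are exactly $(s,t_{d+1})\mapsto\bigl(\dot p(w),\dot h(w),\dot h(v)\bigr)=\bigl(-s,\;t_{d+1},\;t_{d+1}-s^{\top}p(v)\bigr)$, which visibly solve your system and form a $(d+1)$-dimensional space because $\dot p(w)$ and $\dot h(w)$ determine $(s,t_{d+1})$.
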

\begin{proof}
 Let $q$ be a generic realisation of $H$ in $\R^d$.
 For $t \in \mathbb R$, let $p_t: V \to \mathbb R^{d+1}$ be defined by $p_t(v) = (q(v),0) $ for $v\in  V- w$ and $p_t(w) = (q(w),t)$. 
 We will determine the behaviour of the entries of $R(H,p_t)$  as $t \to \infty$. {We will assume throughout this proof that $|\Delta|\geq 3$ for all $\Delta\in E_w$. Hyperedges of size two can be dealt with in the same way as in the coning lemma for graphs.  The argument is more complicated for larger hyperedges and we believe that mixing the two arguments will distract from the main ideas in our proof.} 

Choose a hyperedge $\Delta\in E_w$. Then $q(\Delta)$ is affinely independent and hence
 $q(w)^{\Delta}=\sum_{x\in \Delta-w}\alpha_x q(x)$ for unique scalars $\alpha_x$
 with $\sum_{x\in \Delta-w}\alpha_x=1$.
\begin{claim}\label{clm:alpha}
For each $u\in \Delta-w$, 
$q(w)^{\Delta}-q(w)^{\Delta-u}=\alpha_u\left( q(u)-q(u)^{\Delta-w}\right)$.
\end{claim}
 \begin{proof}
 Let $\pi$ be the orthogonal projection of $\mathbb{R}^d$ onto the affine span of $q(\Delta-u-w)$.
 Then $q(w)^{\Delta}-\pi(q(w)^{\Delta})=\alpha_u\left( q(u)-\pi(q(u))\right)$.
 The claim follows by observing that $\pi(q(w)^{\Delta})=q(w)^{\Delta-u}$ and $\pi(q(u))=q(u)^{\Delta-w}$.
 \end{proof}

Let $I_d$ denote the $d \times d$ identity matrix.
 \begin{claim} \label{clm:limit}
        For each $u \in \Delta- w$,
    $$\lim_{t \to \infty} \left(R(H,p_t)_{\Delta,u}\begin{pmatrix}
        \frac{1}{t^2}I_{d} & 0 \\ 0 & -\frac1t 
    \end{pmatrix}\right) = \left( \frac1{(|\Delta|-2)^2} R(H_w,q|_{V-w})_{\Delta-w,u}, \Vol(q(\Delta-w))^2 \alpha_u \right).$$
\end{claim}   

\begin{proof}
    We have $q(\triangle) \subset \R^d \subset \R^{d+1}$ where the last inclusion is induced by $x \in \R^d \mapsto (x,0) \in \R^{d+1}$. Let 
    $\pi:\mathbb R^{d+1} \to \aff (q(\Delta-w))$ 
    be the orthogonal projection of $\mathbb R^{d+1}$ onto the affine subspace spanned by $q(\Delta-w)$. Note that for all $v \in \Delta-w$ and all $t \in \R$, $p_t(v) = (q(v),0)$.
    An elementary argument using similarity of triangles (see Figure \ref{fig:sim1}) and the fact that $p_t(u)_{d+1}=0$ gives
    \begin{equation}
        \label{eqn:sim}
        {\rm sign}(\alpha_u)\frac{\| p_t(u) - \pi(p_t(u)^\Delta)\|}{ (p_t(u)^\Delta)_{d+1}}
        =\frac{t}{\| p_t(w)^\Delta  - p_t(w)^{\Delta-u}\|},
    \end{equation}
    where ${\rm sign}(\alpha_u)$ denotes the sign of $\alpha_u$,
    which, for large $t$, is determined by which side of 
    {${\rm aff}(p_t(\Delta-u))$ the point  $p_t(u)$ belongs to in ${\rm aff}(p_t(\Delta))$. Both sides of \eqref{eqn:sim} are equal to the tangent of the angle between the hyperplanes ${\rm aff}(p_t(\Delta-u))$ and ${\rm aff}(p_t(\Delta-w))$ in ${\rm aff}(p_t(\Delta))$.} 
\begin{figure}
    \centering
\definecolor{wewdxt}{rgb}{0.43137254901960786,0.42745098039215684,0.45098039215686275}
\definecolor{ududff}{rgb}{0.30196078431372547,0.30196078431372547,1}
\begin{tikzpicture}[line cap=round,line join=round,>=triangle 45,x=0.9cm,y=0.9cm]
\clip(-11.1,-2.446942148760324) rectangle (1.412142994426292,6.071220449740549);
\draw [line width=1.5pt,domain=-10.211920046127254:1.412142994426292] plot(\x,{(-7-0*\x)/7});
\draw [line width=1.5pt,domain=-10.211920046127254:1.412142994426292] plot(\x,{(--51--6*\x)/3});
\draw [line width=1pt,dash pattern=on 1pt off 4pt] (-2,-1)-- (-7.6,1.8);
\draw [line width=1pt,dash pattern=on 1pt off 4pt] (-7.6,1.8)-- (-7.6,-1);
\draw (-7.3,0.4) node {$h$};
\draw [line width=1pt,dash pattern=on 1pt off 4pt] (-6,5)-- (-6,-1);
\draw (-5.7,2) node {$t$};
\begin{scriptsize}
\draw [fill=black] (-9,-1) circle (2.5pt);
\draw (-10,-0.7) node {$p_t( \Delta - u - w)$};
\draw [fill=black] (-2,-1) circle (2.5pt);
\draw (-1.9858965981164773,-1.3) node {$p_t(u)$};
\draw [fill=black] (-6,5) circle (2.5pt);
\draw (-6.4,5.140987891601014) node {$p_t(w)$};
\draw [fill=black] (-7.6,1.8) circle (2pt);
\draw (-8,2.0658389390736227) node {$p_t(u)^\Delta$};
\draw [fill=black] (-7.6,-1) circle (2pt);
\draw[color=black] (-7.628794926004244,-1.3) node {$\pi\left( p_t(u)^\Delta \right)$};
\draw [fill=black] (-6,-1) circle (2pt);
\draw[color=black] (-5.845208533538356,-1.3) node {$p_t(w)^\Delta$};
\end{scriptsize}
\end{tikzpicture}
    \caption{An orthogonal projection of $\aff(p_t(\Delta))$ to a 2-dimensional affine subspace orthogonal to  $\aff(p_t(\Delta-u-w))$. Points are labeled by their pre-images in $\aff(p_t(\Delta))$. The affine subspace $\aff(p_t(\Delta-w))$ maps to the horizontal line and the affine subspace $\aff(p_t(\Delta-u))$ maps to the oblique line. The lengths of the vertical dotted segments are $t$ and $h = (p_t(u)^\Delta)_{d+1} = \|p_t(u)^\Delta - \pi(p_y(u)^\Delta\|$ (assuming here that $t >0$).}
    \label{fig:sim1}
\end{figure}
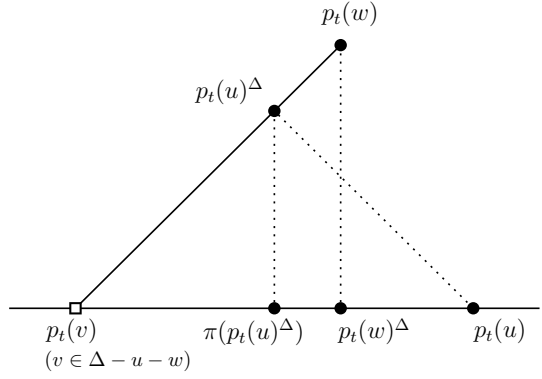

We also have
\begin{equation}
    \label{eqn:v}\Vol(p_t(\Delta-u))^2 = \frac{1}{(|\Delta|-2)^2}\|p_t(w) - p_t(w)^{\Delta-u}\|^2\Vol(p_t(\Delta-w-u))^2.
\end{equation} 
Hence,
\begin{align}
    &\frac{(R(H,p_t)_{\Delta,u})_{d+1}}{t}
    =\frac{\Vol(p_t(\Delta-u))^2}{t} (p_t(u)-p_t(u)^{\Delta})_{d+1} \nonumber\\
    &=-{\rm sign}(\alpha_u) \frac{\Vol(p_t(\Delta-u))^2}{t^2} \|p_t(u) -\pi(p_t(u)^\Delta)\| \| p_t(w)^\Delta - p_t(w)^{\Delta-u}\|   \nonumber\\
    &= -{\rm sign}(\alpha_u) \frac{\Vol(q(\Delta-u-w))^2}{ t^2 (|\Delta|-2)^2} \|p_t(w) - p_t(w)^{\Delta-u}\|^2 \|p_t(u) -\pi(p_t(u)^\Delta)\| \| p_t(w)^\Delta - p_t(w)^{\Delta-u}\| \label{eqn:vol},
\end{align}
where the first equation is by definition, 
the second equation follows from $(p_t(u))_{d+1}=0$ and (\ref{eqn:sim}),
and the third equation follows from (\ref{eqn:v}).

We now send $t$ to $\infty$. 
The fact that $p_t(w) - (q(w),0)$  is orthogonal to $\aff(p_t(\Delta-w))$ gives
\begin{equation}
    \label{eqn:lim}
    \lim_{t \to \infty}
    \begin{pmatrix}
p_t(w)^{\Delta} \\ 
p_t(w)^{\Delta-u} \\
\pi(p_t(u)^{\Delta})       
    \end{pmatrix}
    \rightarrow
    \begin{pmatrix}
        (q(w)^{\Delta},0)\\
 (q(w)^{\Delta-u},0)\\
 (q(u)^{\Delta-u},0)
    \end{pmatrix}
\end{equation}
and hence
\begin{equation}
    \label{eqn:r}\lim_{t \to \infty} 
    \frac{\|p_t(w) - p_t(w)^{\Delta-u}\|}{t} = 1. 
\end{equation}
We can combine \eqref{eqn:vol}, \eqref{eqn:lim} and \eqref{eqn:r} to obtain 
\begin{align*}
    &\lim_{t \to \infty} \frac{(R(H,p_t)_{\Delta,u})_{d+1}}{t}  \\
    &=-{\rm sign}(\alpha_u)  \frac{\Vol(q(\Delta-u-w))^2}{ (|\Delta|-2)^2} \|q(u) -q(u)^{\Delta-w}\| \| q(w)^\Delta - q(w)^{\Delta-u}\| \notag\\
    &=  -\frac{\Vol(q(\Delta-u-w))^2}{ (|\Delta|-2)^2} \|q(u) -q(u)^{\Delta-w}\|^2  \alpha_u \qquad (\text{by Claim~\ref{clm:alpha}})  \\
    &= - \Vol(q(\Delta-w))^2 \alpha_u 
\end{align*}
which verifies that equality holds for  the $(d+1)$-th component in the statement of the claim. 

In addition we 
deduce that
\begin{align*}
    \label{eqn:o}
    \lim_{t \to \infty} \frac{R(H,p_t)_{\Delta,u}}{t^2} &= \lim_{t \to \infty } \frac{\Vol(p_t(\Delta-u))^2}{t^2} (p_t(u)-p_t(u)^\Delta) & \\
    &= \lim_{t \to \infty } \frac{\Vol(p_t(\Delta-w-u))^2\|p_t(w) - p_t(w)^{\Delta-u}\|^2}{(|\Delta |-2)^2t^2} (p_t(u)-p_t(u)^\Delta) & \text{using (\ref{eqn:v})}\\
    &= \frac{\Vol(q(\Delta-w-u))^2}{(|\Delta|-2)^2} (q(u)-q(u)^{\Delta-w},0) & \text{using (\ref{eqn:lim}) and (\ref{eqn:r})} \\
    &= \left(\frac{1}{(|\Delta|-2)^2} R(H_w,q|_{V-w})_{\Delta-w,u},0\right) &  
\end{align*}
This completes the proof of the claim.
\end{proof}

Let $R_t$ be the matrix obtained from $R(H,p_t)$ by deleting the $(d+1)$ columns associated with $w$ and by permuting columns so that the last $|V|-1$ columns consist of the columns indexed by the $(d+1)$-th coordinates of the vertices in $V-w$.
Since $p_t(v)=(q(v),0)$ for $v\in V-w$, 
$R_t$ can be written as 
\[
R_t=\kbordermatrix{
 &\text{first $d$ coordinates of $V-w$} & \text{$(d+1)$-th coordinates of $V-w$}\\
E_w & B_t & A_t \\
E\setminus E_w & R(H-w,q|_{V-w}) & 0 
}
\]
for some $A_t$ and $B_t$. 
Since $\rank R(H-w, q|_{V-w})=d(|V|-1)-{d+1\choose 2}$ by the hypothesis that  $H-w$ is volume rigid in $\R^d$, it suffices to show that $\rank A_t=|V|-1$ for some $t$. 

 By Claim~\ref{clm:limit},
 $\lim_{t\rightarrow \infty} -\frac1t A_t 
 =D A(H_w,q|_{V-w})$,
 where  $D$ denotes the $|E_w| \times |E_w|$ diagonal matrix 
 with diagonal entries $\Vol(q(\Delta-w))^2$ for $\Delta \in E_w$. 
 Since $q$ is in general position, $D$ is non-singular.
 The hypothesis that $\rank \cA_d(H_w)=|V|-1$ now gives $\rank D A(H_w,q|_{V-w}) =|V|-1$.
 Thus, for all but finitely many values of $t$, $\rank A_t=|V|-1$.
 This completes the proof.
\end{proof}

\medskip
\noindent
{\bf Remarks.}
A more careful analysis  in the proof of Lemma~\ref{lem:v_add} tells us that:\\
(a)
when $|\Delta|\geq 3$ for each $\Delta\in E_w$, the matrix $R_t$ satisfies
{
\begin{equation}
    \label{eqn:limit}
    \lim_{t\to \infty} \begin{pmatrix}
        I_{|E_w|} & 0 \\ 0 & t^2 I_{|E\sm E_w|} 
    \end{pmatrix}R_t \begin{pmatrix} \frac1{t^2}I_{d|V|} & 0 \\ 0 & \frac1t I_{|V|-1}  \end{pmatrix}  = \begin{pmatrix}
        D'R(H_w,q|_{V-w}) & DA(H_w,q|_{V-w})  \\
        R(H-w,q|_{V-w}) & 0   
    \end{pmatrix}
\end{equation}
}

\noindent 
where 
$D,D'$ denote the $|E_w| \times |E_w|$ diagonal matrices 
with diagonal entries $\Vol(q(\Delta-w))^2$ and $(|\Delta|-2)^2$ for $\Delta \in E_w$, respectively.
Hence, even if  $H-w$ is not volume rigid in $\mathbb{R}^d$, we may still be able to show that $H$ is volume rigid in $\mathbb{R}^{d+1}$  by analysing the block matrix on the right hand side of  (\ref{eqn:limit}).\\[1mm]
(b) any infinitesimally volume rigid realisation of $H-w$ in $\R^d$ can be extended to an infinitesimally rigid realisation of $H$ in $\R^{d+1}$ by placing $w$ at all but finally many points on the $(d+1)$'th coordinate axis.

\medskip

We next obtain a sufficient condition for the rank of the matroid $\cA_d(H)$ of a $k$-uniform hypergraph $H=(V,E)$ to be equal to $| V|$. 
A $k$-uniform hypergraph $H$ is {\em strongly connected} if, 
for any two distinct $\Delta,\Delta'\in E$, there exists a sequence of hyperedges $\Delta_1,\Delta_2,\ldots,\Delta_m$ such that $\Delta_1=\Delta$, $\Delta_m=\Delta'$ and
$|\Delta_i\cap \Delta_{i+1}|=k-1$ for all $1\leq i\leq m-1$. We will refer to the maximal strongly connected sub-hypergraphs of $H$ as the {\em strongly connected components} of $H$. It is easy to see that the property of belonging to the same strongly connected component is an equivalence relation on $E$ and hence the edge sets of the  strongly connected components of $H$ partition $E$.

\begin{lemma}
    \label{lem:alt1}
    Let $H=(V,E)$ be a $k$-uniform hypergraph. 
    Suppose that  
    every strongly connected component of $H$ contains a copy of $K_{k+1}^{k}$. Then $\rank \cA_d(H) = |V|$ for all $d\geq k$. 
\end{lemma}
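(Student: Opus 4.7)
The plan is to argue by an edge-by-edge induction within each strongly connected component of $H$, starting from the guaranteed $K_{k+1}^{k}$ sub-hypergraph, and then combine components. For the base case I would show that $A(K_{k+1}^{k},p)$ has rank $k+1$ for generic $p:V\to\R^d$ with $d\geq k$. Since the entries of $A$ are rational in the coordinates of $p$, rank is lower semicontinuous in $p$; moreover the generic rank is monotone nondecreasing in $d$, because any realisation in $\R^d$ embeds in $\R^{d+1}$ as a non-generic realisation with an identical matrix $A$ (projecting the origin onto an affine subspace contained in $\R^d$ gives the same point whether we work in $\R^d$ or $\R^{d+1}$). So it suffices to exhibit one realisation in $\R^k$ for which the $(k+1)\times(k+1)$ matrix $A(K_{k+1}^{k},p)$ is nonsingular. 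The symmetric realisation $p(v_i)=\be_i$ for $i=1,\dots,k$ and $p(v_0)=a\mathbf{1}$ with generic $a\in\R$ should work: by symmetry the matrix has one row equal to $(0,\tfrac1k,\dots,\tfrac1k)$ and $k$ remaining rows that use only two distinct values $\beta$ and $\gamma$ (both nonzero for generic $a$), and standard row reduction expresses the determinant as a nonzero multiple of $\beta\gamma^{k-1}$.

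For the inductive step in a strongly connected component $C$, I would order its hyperedges as $H_0\cup\{\Delta_1,\Delta_2,\dots\}$ where $H_0\cong K_{k+1}^{k}$ and each $\Delta_i$ shares $k-1$ vertices with an earlier hyperedge; such an ordering exists by a breadth-first traversal on the dual graph whose connectivity is guaranteed by strong connectivity. Letting $H_i$ be the sub-hypergraph after inserting $\Delta_i$ and $V_i=V(H_i)$, I would maintain the invariant $\rank A(H_i,p|_{V_i})=|V_i|$. The key observation is that the $v$-column of $A(H,p)$ is supported only on rows indexed by hyperedges containing $v$. If $\Delta_i\subseteq V_{i-1}$ then $|V_i|=|V_{i-1}|$ and the rank is squeezed between the old rank $|V_{i-1}|$ and the column bound $|V_i|$, forcing equality; if $\Delta_i$ introduces a new vertex $v$, then the $v$-column of $A(H_i,p|_{V_i})$ is zero in every old row and has nonzero entry $\alpha_v^{\Delta_i}$ in the new row (by genericity of $p$), so the new row is independent of the old rows and the rank jumps by exactly one.

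Finally, writing $E=E_1\sqcup\cdots\sqcup E_c$ for the decomposition of $E$ into strongly connected components with vertex sets $V_1,\dots,V_c$, the inductive step says that the rows of $A(H,p)$ indexed by $E_i$ form a $|V_i|$-dimensional subspace of the $|V_i|$-dimensional coordinate subspace $\R^{V_i}\subseteq\R^V$, and therefore span all of $\R^{V_i}$. Summing, the row span of $A(H,p)$ contains $\sum_i \R^{V_i}=\R^V$, giving $\rank\cA_d(H)=|V|$. The main obstacle is the base case: because the entries of $A(K_{k+1}^{k},p)$ are defined implicitly via orthogonal projections, their direct computation is delicate, and one must choose the explicit realisation carefully to keep the determinant computation tractable.
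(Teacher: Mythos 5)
Your proposal is correct and follows essentially the same route as the paper: establish nonsingularity of the $(k+1)\times(k+1)$ block coming from the copy of $K_{k+1}^{k}$ at a carefully chosen special position, grow each strongly connected component one hyperedge at a time so that each step adds at most one new vertex whose column has a generically nonzero entry only in the new row, and combine components using the fact that each component's rows are supported only on its own vertex columns. The sole difference is cosmetic: your special position ($\be_1,\dots,\be_k$ together with $a\mathbf{1}$) requires the $\beta\gamma^{k-1}$ computation, whereas the paper places the $k+1$ points at the vertices of a regular simplex centered at the origin, which yields the cleaner matrix $\frac{1}{k+1}(J-I)$.
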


\begin{proof}
    Let $p$ be a generic realisation of $H$ in $\R^d$. It will suffice to show that $\rank A(H,p)=|V|$.
    
    We first consider the case when $H$ is strongly connected.
    Suppose that $V = \{v_1,\cdots,v_n\}$ and that $Z = \{v_1,\ldots,v_{k+1}\}$ induces a copy of $K_{k+1}^{k}$ in  $H$.  Let $Y = \{\Delta_1,\ldots,\Delta_{k+1}\}$ be the set of hyperedges of  $H[Z]$, ordered so that $\Delta_i=V-v_i$ for $1\leq i\leq k+1$. Let $B(p)$ be the $(k+1) \times (k+1)$ submatrix of $A(H,p)$ corresponding to the rows indexed by $Y$ and the columns  indexed by $Z$. 
    \begin{claim}
        \label{clm:B}
        $B(p)$ is a non-singular matrix.
    \end{claim}
    \begin{proof}
         Observe that the entries of $B(p)$ are rational functions of $p$ and since $p$ is generic it suffices to find a special position $q$ such that $B(q)$ is non-singular. Choose any $q:V(\cS) \to \mathbb R^k$ such that $q(v_1),\ldots,q(v_{k+1})$ are the vertices of a regular $k$-dimensional simplex in $\mathbb R^d$, centered on the origin. Then
         $B(q) = \frac1{k+1} (J - I)$ where $J$ is the $(k+1)\times (k+1)$ matrix of all 1's and $I$ is the identity matrix, which is a non-singular matrix. 
\end{proof}

    Now, since $H$ is strongly connected, after a suitable relabelling of the vertices $v_{k+2},\ldots,v_{n}$, we can choose 
    $\Delta_{k+2},\ldots, \Delta_{n}\in E(H_w)$ such that, for
    $k+2 \leq m \leq n$, $\bigcup_{i =1}^m \Delta_i = \{v_{1},\ldots,v_m\}$. For $k+1 \leq m \leq n$, let $C_m$ be the $i \times i$ submatrix of $A(H,p)$ induced by the rows indexed by $\Delta_1,\ldots,\Delta_m$ and columns indexed by $v_1,\ldots,v_m$. Observe that for $k+1\leq m \leq n-1$, we have a block decomposition 
    $$C_{m+1} = 
    \kbordermatrix{
    &v_1 & \cdots & v_m & v_{m+1} \\
    \Delta_1 & & & & 0 \\
    \vdots & & C_m & & \vdots \\
    \Delta_m & & & & 0 \\
    \Delta_{m+1} & \star & \cdots & \star & \alpha_{v_{m+1}} \\
    }
    $$
    Also $\alpha_{v_{m+1}} \neq 0$ for each $m$ since $p$ is generic. It follows that, if $C_m$ is non-singular, then $C_{m+1}$ is also non-singular. Since $C_{k+1}=B(p)$, $C_{k}$ is non-singular by Claim \ref{clm:B}. Therefore $C_n$ is non-singular and so $\rank A(H,p) = n$. This completes the proof in the case that $H$ is strongly connected.
    
    It remains to consider the case when $H$ is not strongly connected.
    Let $\lambda$ be an element of the right kernel of $A(H,p)$ and  $v_i \in V$. Since every strongly connected component of $H$ contains a copy of $K_{k+1}^k$, every vertex in $V$ belongs to a hyperedge in $E$ and hence we can choose a strong component $H_i$ of $H$ such that $v_i \in V(H_i)$. Suppose that $|V(H_i)| = m$. Let $A_i$ be the submatrix of $A(H,p)$ consisting of the rows labelled by $E(H_i)$. Then the entries in every  column of $A_i$ labelled by a vertex in $V\sm V(H_i)$ are zero. In addition, since $H_i$ is strongly connected and contains a copy of $K_{k+1}^{k}$, $\rank(A_i) = m$ by the preceding paragraph. Since $A(H,p)\lambda = 0$ it follows that $\lambda_{v_i} = 0$ and, since $v_i\in V$ was arbitrary, $\lambda = 0 $. Hence $\rank A(H,p)= n$.
\end{proof}

\section{Volume  rigidity of simplicial complexes}\label{sec:jface rigid}

We will prove Theorem \ref{thm:d-3} and  show that Conjecture \ref{con:d-2} holds when $d=4,5,6$.

 We first need to introduce some terminology.  
 Recall that an  {\em (abstract) simplicial complex} $\cS$ is a family of sets which is closed under 
 inclusion. We refer to the elements of $\cS$ of cardinality $k+1$ as {\em $k$-faces} and the maximal elements of $\cS$ as {\em facets}. We use $V(\cS)$ and $E(\cS)$ to denote the set of {\em vertices}, i.e. $0$-faces, and {\em edges}, i.e. 1-faces, of $\cS$, respectively. Given $X\in \cS$, the {\em link}  of $X$ in $\cS$ is the complex 
 $$\lk(X)= \lk_\cS(X) = \{Y:X\cap Y=\emptyset \mbox{ and } X\cup Y\in \cS\}$$ and the {\em star}  of $X$ is the complex 
 $$\st(X)= \st_\cS(X) = \{X'\cup Y:X'\subseteq X \mbox{ and } Y\in \lk(X)\}.$$  We say $\cS$ is a {simplicial $d$-complex} if each of its facets has cardinality $d+1$.
 
Recall that the {\em topological realisation} of a simplicial complex $\cS$ is the subset of $\R^{V(\cS)}$ defined by 
$$|\cS| = \bigcup_{X \in \cS} \conv\{\be_v: v \in X\}$$
where, for a vertex $v \in V(\cS)$, $\be_v$ is the corresponding standard basis vector of $\R^{V(\cS)}$.
We say that $\cS$ is a {\em  (connected) simplicial $d$-manifold} if $|\cS|$ is a connected $d$-manifold.  
 
We will refer to the  $(k+1)$-uniform hypergraph  whose hyperedges are the $k$-faces of a simplicial complex $\cS$ as the  {\em $k$-skeleton hypergraph} of $\cS$ and denote it by $H_k(\cS)$. A simplicial $d$-complex is said to be {\em strongly connected} if its $d$-skeleton hypergraph is strongly connected.

Theorem~\ref{thm:d-3} follows immediately from our next result and the above mentioned result of Kalai~\cite[Theorem 1.2]{K} (which tells us that the 1-skeleton of a simplicial $(d-1)$-manifold is rigid in $\R^d$).
\begin{theorem}
\label{thm:new}
    Let $\cS$ be a simplicial $t$-complex such that $H_{l}(\cS)$  is volume rigid in $\R^d$, for some $1 \leq l \leq t \leq d-1$. Then 
    $H_k(\cS)$ is volume rigid in $\R^d$ for all $1\leq k\leq t-2$.
\end{theorem}
\begin{proof}
Let $p:V(\cS) \to \R^d$ be generic and suppose that that $\dot{p}$ is an infinitesimal motion of $(H_{k}(\cS),p)$.

Let $\Delta$ be a $t$-face of $\cS$ and let
$K$ be the copy of $K_{t+1}^{k+1}$ consisting of the $k$-faces (i.e. $(k+1)$-subsets) of $\Delta$. Then, since $k \leq t-2$,
Lemma \ref{lem:simplex} implies that $(K,p|_\Delta)$ is volume rigid in $\R^d$ and so $\dot{p}$ must infinitesimally preserve the volume of  every $l$-face of $(\cS,p)$ which is contained in $\Delta$. Since every $l$-face of $\cS$ is contained in some $t$-face of $\cS$ this implies that  $\dot{p}$
must also be an infinitesimal motion of $(H_l(\cS),p)$ and so $\dot{p}$ is a trivial infinitesimal motion as required.
\end{proof}

\begin{proof}[Proof of Theorem \ref{thm:d-3}]
    Recall that $H = H_k(\cS)$ where $\cS$ is a connected simplicial $(d-1)$-manifold for some $d\geq 4$ and $1 \leq k \leq d-3$. Now the theorem follows from Kalai's Theorem \cite[Theorem 1.2]{K} and Theorem \ref{thm:new} with $l=1$ and $t =d-1$. 
\end{proof}

Combined with Theorem \ref{thm:d-3}, Conjecture \ref{con:d-2} would imply that the $k$-skeleton hypergraph of a connected simplicial $(d-1)$-manifold is volume rigid in $\R^d$ for all $1\leq k\leq d-2$. We  first point out that the same conclusion holds for strongly connected simplicial $d$-complexes.

\begin{theorem}\label{thm:weak}
Let $d\geq 3$ be an integer and $\cS$ be a strongly connected simplicial $d$-complex. Then the $k$-skeleton hypergraph of $\cS$ is volume rigid in $\R^{d}$ for all $1\leq k\leq d-2$. 
\end{theorem}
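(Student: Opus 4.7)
The plan is to decompose $H_k(\cS)$ into volume rigid pieces, one for each facet of $\cS$, and then assemble them using the gluing lemma (Lemma~\ref{lem:glue_k}), with the strong connectivity of $\cS$ guaranteeing that consecutive pieces overlap on enough vertices.

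First, I would observe that since $k \leq d-2 < d$ and $\cS$ is a simplicial $d$-complex, every $k$-face of $\cS$ is contained in at least one $d$-face of $\cS$. Writing $H_\Delta$ for the sub-hypergraph of $H_k(\cS)$ induced on the vertex set of a $d$-face $\Delta$, this gives $H_k(\cS) = \bigcup_{\Delta} H_\Delta$, where $\Delta$ ranges over the $d$-faces of $\cS$. Each $H_\Delta$ is a copy of $K_{d+1}^{k+1}$, and the assumption $1 \leq k \leq d-2$ translates precisely into the inequalities $2 \leq k+1 \leq d-1 \leq (d+1)-2$ required by Theorem~\ref{thm:complete} (with vertex count $n=d+1$, ambient dimension $d$, and uniformity $k+1$). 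Hence each $H_\Delta$ is volume rigid in $\R^d$.

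Next I would invoke strong connectivity of the $d$-skeleton hypergraph of $\cS$ to order its $d$-faces as $\Delta_1, \Delta_2, \ldots, \Delta_m$ so that, for each $i \geq 2$, there exists some $j < i$ with $|\Delta_i \cap \Delta_j| = d$ (equivalently, $\Delta_i$ and $\Delta_j$ share a $(d-1)$-face). Setting $G_i = \bigcup_{l=1}^{i} H_{\Delta_l}$, I would prove by induction on $i$ that $G_i$ is volume rigid in $\R^d$. The base case $i=1$ is immediate from the previous paragraph. For the inductive step, note that $G_{i+1} = G_i \cup H_{\Delta_{i+1}}$ and that $|V(G_i) \cap V(H_{\Delta_{i+1}})| \geq |\Delta_j \cap \Delta_{i+1}| = d$, so Lemma~\ref{lem:glue_k} applies and yields the volume rigidity of $G_{i+1}$. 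Taking $i = m$ gives $G_m = H_k(\cS)$, completing the argument.

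I do not expect a genuine obstacle here: given Theorem~\ref{thm:complete} and Lemma~\ref{lem:glue_k}, the proof is a clean induction along a strongly connected ordering of the facets. The only subtlety worth mentioning is the compatibility of bookkeeping between the two definitions of ``strongly connected'' in the paper (for a $d$-complex versus for a $(d+1)$-uniform hypergraph): the condition $|\Delta_i \cap \Delta_{i+1}| = (d+1)-1 = d$ from the hypergraph definition is exactly what the gluing lemma requires, so no further work is needed.
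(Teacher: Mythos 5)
Your proof is correct and is essentially the paper's argument: both rely on Theorem~\ref{thm:complete} to get volume rigidity of the copy of $K_{d+1}^{k+1}$ induced on each facet and on Lemma~\ref{lem:glue_k} to glue along shared $(d-1)$-faces, with strong connectivity supplying the overlap. The only cosmetic difference is that you run an explicit induction along a connected ordering of the facets, whereas the paper phrases the same step as a maximal-subcomplex contradiction.
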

\begin{proof}
Let $H=H_{k}(\cS)$.
Suppose, for a contradiction, that 
$H$ is not volume rigid in $\R^{d}$. Let $\cT$ be a maximal simplicial $d$-complex contained in $\cS$ such that  $H_{k}(\cT)$ is volume rigid in $\R^{d}$. Since $\cS$ is 
strongly connected, we can find two $d$-simplices $X_1$ and $X_2$ with $X_1\in \cT$, $X_2\in \cS\sm\cT$ and $|X_1\cap X_2|\geq d$. Then $H[X_2]$ is volume rigid by Theorem \ref{thm:complete} and we can now deduce that $H_k(\cT)\cup H[X_2]$
is  volume rigid in $\R^{d}$ by Lemma \ref{lem:glue_k}. This contradicts the maximality of $\cT$ and completes the proof of the theorem.
\end{proof}

We next verify Conjecture \ref{con:d-2} when $d=4,5,6$. Our inductive proof 
naturally leads us to consider a larger family of simplicial complexes than simplicial manifolds. Following Kalai \cite{K}, we define a {\em simplicial homology $k$-manifold} to be a simplicial $k$-complex $\cS$ with the property that, for all $0\leq j\leq k-1$ and all $j$-faces $X$ of $\cS$, the simplicial homology (with coefficients in $\mathbb Z$) of $\lk_\cS(X)$  is the same as that of a simplicial $(k-j-1)$-sphere. We say that $\cS$ is a {\em simplicial homology $k$-sphere} if, in addition, $\cS$  has the same
simplicial homology as a simplicial $k$-sphere. We will  use the fact that every simplicial homology $2$-sphere is a simplicial 2-sphere.
\begin{lemma}
    Let $\cS$ be a connected homology $k$-manifold for some $k \geq 1$. Then $H_k(\cS)$ is strongly connected. 
\end{lemma}
\begin{proof}
    For $k = 1$, $H_1(\cS)$ is a connected graph each of whose vertices has degree 2. Hence, $H_1(\cS)$ is a cycle graph and the lemma is true in this case. So we can assume from now on that $k\geq 2$.
    Suppose that $H_k(\cS)$ is not strongly connected. Let $K$ be a maximal strongly connected set of $k$-faces of $\cS$ and let $L$ be the complement of $K$ in the set of $k$-faces of $\cS$. Let $\cS[K] = \{ F \in \cS : F \text{ is contained in some element of $K$}\} $ and define $ \cS[L]$ analogously. By our assumption $L \neq \emptyset$ and so $\cS[K]$ and $\cS[L]$ are both non-empty proper subcomplexes of $\cS$ such that $\cS[K] \cup \cS[L] = \cS$. Since $\cS$ is connected, there is a non-empty face in $\cS[L] \cap \cS[K]$. Let $F$ be a maximal face in $\cS[L] \cap \cS[K]$. Then $F$ cannot be a $(k-1)$-face since, by hypothesis, every $(k-1)$-face of $\cS$ belongs to exactly two $k$-faces, and if one of these lies in $K$ and the other in $L$ this would contradict the maximal choice of $K$. Therefore the dimension of $F$ is at most $k-2$. 
    
    Now $\lk_\cS(F) = \lk_{\cS[K]}(F) \cup \lk_{\cS[L]}(F)$ and, by the maximal choice of $F$,  $\lk_{\cS[K]}(F) \cap \lk_{\cS[L]}(F) = \{\emptyset\}$. It follows that $\lk_\cS(F)$ is not connected. But since $\cS$ is a homology $k$-manifold, the only faces that have a disconnected link are $(k-1)$-faces, contradicting our earlier deduction.
\end{proof}

We also need the following construction. Given 
a simplicial complex  $\cS$ and a finite set  of vertices $Z$ disjoint from $V(\cS)$, the {\em simplicial cone of $\cS$ with $Z$} is the complex 
$$\cS*Z=\{X\cup Z':X\in \cS \mbox{ and } Z'\subseteq Z\}.$$

\begin{lemma} \label{lem:cone}
Let  $\cS$ be a simplicial 2-sphere, $uv$ be an edge of $\cS$, $d\geq 4$ be an integer, and $Z$ be a set of $(d-3)$ vertices disjoint from $V(\cS)$. Suppose that $H_{d-2}(\cS/uv*Z)$ is volume rigid in $\R^d$. 
Then $H_{d-2}(\cS*Z)$ is volume rigid in $\R^{d}$. 
\end{lemma}
\begin{proof} We have $(\cS*Z)/uv=(\cS/uv)*Z$, and  so $H_{d-2}((\cS/uv)*Z)=H_{d-2}((\cS*Z)/uv)$. Since $H_{d-2}((\cS/uv)*Z)$ is volume rigid in $\R^{d}$, we can complete the proof by applying Lemma \ref{lem:vsplit_kface} to $H=H_{d-2}(\cS*Z)$. It will suffice to show that the matrix $A_{uv}(H,q,\bd)$ in the statement of Lemma \ref{lem:vsplit_kface} has rank $d$ for a generic realisation $q$ of $H/uv$ in $\R^d$ and a generic vector $\bd\in \R^d$. And since, by Lemma \ref{lem:rational}, the entries of $A_{uv}(H,q,\bd)$ are rational functions of the components of $q,\bd$ we can accomplish this by showing that $\rank A_{uv}(H,q,\bd)=d$ for some $q,\bd$, 
irrespective of whether $(H/uv,q)$ is infinitesimally volume rigid in $\mathbb R^d$ or not.

Let $Z=\{z_1,z_2,\ldots,z_{d-3}\}$ and let
$\Delta_x=\{u,v,x\}$ and $\Delta_y=\{u,v,y\}$ be the two 2-faces of $\cS$ which contain $uv$. 
Put $q(v)={\bf 0}$, $q(z_i)=\be_{i}$ for $1\leq i\leq d-3$, 
$q(x)=\be_{d-2}$, $q(y)=\be_{d-1}$  and choose $\bd=(a_1,a_2,\ldots,a_{d})\in \R^{d}$.  Let $\Delta_i=\Delta_x\cup (Z-z_i)$ for $1\leq i\leq d-3$, $\Delta_{d-2}= \Delta_y\cup (Z-z_{d-3})$, $\Delta_{d-1}=( \Delta_x-u)\cup Z$ and $\Delta_{d}=( \Delta_y-u)\cup Z$. Then, adopting the notation of Lemma \ref{lem:vsplit_kface}, we have $\Delta_i\in E_{uv}$ for $1\leq i\leq d-2$ and $\Delta_i\in E_{v}^u$ for $ i= d-1,d$.

Then, the row of $A=A_{uv}(H,q,\bd)$ indexed by $\Delta_i$, $1\leq i\leq d-2$, is  given by the projection of $\bd$ onto  $\langle q(w)-q(v):w\in \Delta_i-u\rangle^\perp$, which is equal to $a_i\be_i+a_{d-1}\be_{d-1}+a_{d}\be_{d}$ when $1\leq i\leq d-3$, and is equal to  $a_{d-3}\be_{d-3}+a_{d-2}\be_{d-2}+a_{d}\be_{d}$ when $i = d-2$.
Similarly, the rows of $A$ indexed by $\Delta_{i}$  are given by $q(v)^{\Delta_i}$ for $i=d-1,d$, i.e.~the projection of $q(v)=\bf 0$  onto the affine span of $q(\Delta_{i}-v)$,  
which is equal to $\frac{1}{d-2}(\be_{1}+\be_2+\ldots+\be_{d-2})$ when $i=d-1$ and $\frac{1}{d-2}(\be_{1}+\be_2+\ldots+\be_{d-3}+\be_{d-1})$ when $i=d$. Hence, up to scalar multiplication of the last two rows, the submatrix of $A$ containing these $d$ rows is: 
$$
B = \begin{pmatrix}
a_1 & 0 &\ldots& 0 &0&0&   a_{d-1} & a_{d}\\ 
0 & a_2&\ldots& 0 &0&0&   a_{d-1} & a_{d}\\
\vdots & \vdots& \vdots &\vdots&\vdots&\vdots&   \vdots &\vdots\\
0 & 0&\ldots& 0 &a_{d-3} & 0 &   a_{d-1} & a_{d}\\
0 & 0&\ldots& 0 &a_{d-3} &a_{d-2} & 0   & a_{d}\\
1 & 1&\ldots& 1 &1 &1 & 0   & 0\\
1 & 1&\ldots& 1 &1 &0 & 1   & 0
\end{pmatrix}.
$$
It is straightforward to check when $a_1=a_2=\ldots=a_{d-4}=a_{d-2} = a_{d}=1$ and $a_{d-3} = a_{d-1} = 0$, the determinant of $B$ is $1$.
For example, using a block decomposition of this specialisation of $B$ we have 
$$\begin{array}{rl}    
\det(B) = \det\begin{pmatrix}
    I_{d-4}& Y \\ U & V 
\end{pmatrix} & = \det(I_{d-4}) \det(V - U I_{d-4}^{-1}Y) \\
& = \det\begin{pmatrix}
    0&0&0& 1 \\
    0 & 1 & 0 & 1 \\
    1 & 1 & 0 & 4-d \\
    1 & 0 & 1 & 4-d 
\end{pmatrix} = 1.
\end{array}
$$
Hence  $A$ has rank $d$ for all generic $q$ and ${\bd}$.
\end{proof}

\begin{lemma} \label{lem:plane-cone}
Let  $\cS$ be a simplicial 2-sphere and $Z$ be a set of $(d-3)$ vertices disjoint from $V(\cS)$ for some   $d\in \{4,5,6\}$. Suppose that   $|V(\cS)|\geq 5$ when  
$d\in \{5,6\}$.
Then $H_{d-2}(\cS*Z)$ is volume rigid in $\R^{d}$. 
\end{lemma}
\begin{proof} We proceed by induction on $|V(\cS)|$. The inductive step is given by Lemma \ref{lem:cone} and a classical result on simplicial  
polyhedra, see for example \cite{RS}, that  tells us that  there exists an edge $uv$ of $\cS$ such that $\cS/uv$ is a simplicial 2-sphere whenever $|V(\cS)|\geq 5$. Hence we need only verify the base cases of our induction.

 When $d=4$ and $|V(\cS)|=4$, $\cS$ is the boundary complex of the 3-simplex,
$H_{2}(\cS*z)=K_{5}^3$, and $H_{2}(\cS*z)$ is volume rigid in $\R^{4}$ by Lemma \ref{lem:simplex}. 

When  $d\in \{5,6\}$ and $|V(\cS)|=5$, $\cS$ is the boundary complex of the triangular bipyramid. We have verified that $H_{d-2}(\cS*Z)$ is volume rigid in $\R^{d}$ in this case for both $d=5,6$ by computer, using an exact computation over a finite field. In particular we have verified that the following realisation of $H_{d-2}(\cS*Z)$ in $\R^d$ is infinitesimally volume rigid for $d=5,6$. Let $v_1,v_2,v_3$ be the vertices of the base of the triangular bipyramid, $v_4,v_5$ be the apex vertices and $Z=\{z_1,z_2,\ldots,v_{d-3}\}$. Put $p(v_i)=\be_i$ for $i\in \{1,2,3\}$, $p(v_4)=\be_3-\be_1$, $p(v_5)=\be_1-\be_2$ and $p(z_i)=\be_{i+3}$ for $1\leq i\leq d-3$. 
\end{proof}

\begin{theorem} \label{thm:d=5}
Let $\cS$ be a connected, simplicial homology $(d-1)$-manifold for some $d\in \{4,5,6\}$. Then $H_{d-2}(\cS)$ is volume rigid in $\R^d$.
\end{theorem}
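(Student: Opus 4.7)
The case $d=4$ is exactly Theorem~\ref{thm:d=4}, so I will handle $d\in\{5,6\}$. The plan is to mimic the proof of Theorem~\ref{thm:d=4}, replacing the single-vertex star by the star of a $(d-4)$-face. The point is that when $Z$ is a $(d-4)$-face of $\cS$, the link $L_Z$ is a simplicial $2$-sphere (since $\cS$ is a simplicial $(d-1)$-manifold) and $St_Z=L_Z*Z$ is a simplicial cone of a $2$-sphere with $|Z|=d-3$ new vertices---precisely the hypothesis of Lemma~\ref{lem:plane 2-cone}. Hence whenever $|V(L_Z)|\geq 5$, that lemma yields $H_{d-2}(St_Z)$ volume rigid in $\R^d$.

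Argue by contradiction. Assume $H_{d-2}(\cS)$ is not volume rigid in $\R^d$, and let $\cT$ be a maximal sub-simplicial-complex of $\cS$ for which $H_{d-2}(\cT)$ is volume rigid in $\R^d$ (non-emptiness of $\cT$ is established by the same dichotomy discussed below). Since $\cS$ is a connected simplicial $(d-1)$-manifold, it is strongly connected, and we may choose facets $X_1\in\cT$ and $X_2\in\cS\setminus\cT$ with $F:=X_1\cap X_2$ a $(d-2)$-face; put $v_i:=X_i\setminus F$ for $i=1,2$. My strategy is to exhibit a sub-simplicial-complex $\cR\subseteq\cS$ that contains both $X_1$ and $X_2$ and has $H_{d-2}(\cR)$ volume rigid in $\R^d$. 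Since $X_1\subseteq V(\cT)\cap V(\cR)$ contributes $d$ shared vertices, Lemma~\ref{lem:glue_k} would then give $H_{d-2}(\cT\cup\cR)$ volume rigid, a strict enlargement of $\cT$ contradicting maximality.

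I distinguish two cases in the construction of $\cR$. If there exists a $(d-4)$-face $Z\subseteq F$ with $|V(L_Z)|\geq 5$, I take $\cR=St_Z$; then $H_{d-2}(\cR)$ is volume rigid by Lemma~\ref{lem:plane 2-cone} and trivially contains $X_1,X_2$. Otherwise every $(d-3)$-subset $Z\subseteq F$ has $|V(L_Z)|=4$, and since $L_Z$ is a simplicial $2$-sphere this forces $L_Z$ to be the boundary complex of a $3$-simplex on the four vertices $\{v_1,v_2\}\cup(F\setminus Z)$. Every $3$-subset of $V(L_Z)$ is then a face of $L_Z$, so for each $w\in F\setminus Z$ the set $Z\cup\{v_1,v_2,w\}$ is a facet of $\cS$. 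Letting $Z$ range over all $(d-3)$-subsets of $F$, this shows that $(F\setminus\{w\})\cup\{v_1,v_2\}$ is a facet of $\cS$ for every $w\in F$; together with $X_1$ and $X_2$ these account for all $d+1$ subsets of size $d$ of the $(d+1)$-set $F\cup\{v_1,v_2\}$. Hence $\cS$ contains, on the vertex set $F\cup\{v_1,v_2\}$, a copy of the boundary complex of the $d$-simplex. Taking $\cR$ to be this sub-complex, $H_{d-2}(\cR)=K_{d+1}^{d-1}$ is volume rigid in $\R^d$ by Theorem~\ref{thm:complete}. Non-emptiness of $\cT$ is obtained by running the same dichotomy on any adjacent pair of facets of $\cS$ (such a pair exists since a closed connected $(d-1)$-manifold has more than one facet).

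The main obstacle is the second case: one must verify that the hypothesis ``every $(d-4)$-face $Z\subseteq F$ has a tetrahedron-boundary link'' is rigid enough to force all $d+1$ facets of a boundary complex of the $d$-simplex into $\cS$, so that Theorem~\ref{thm:complete} can substitute for Lemma~\ref{lem:plane 2-cone} in providing a volume-rigid sub-hypergraph covering both $X_1$ and $X_2$.
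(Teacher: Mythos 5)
Your proposal is correct and follows essentially the same route as the paper: reduce to $d\in\{5,6\}$ via Theorem~\ref{thm:d=4}, grow a maximal volume-rigid piece, and at each boundary pair of adjacent facets either glue in the star of a $(d-4)$-face with $|V(L_Z)|\ge 5$ (Lemma~\ref{lem:plane 2-cone}) or, when every such link is a tetrahedron boundary, extract a copy of $K_{d+1}^{d-1}$ on $X_1\cup X_2$ and glue that instead. The paper packages the non-emptiness issue as a separate ``Case 2'' rather than a remark, but the argument is the same.
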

\begin{proof}
Suppose, for a contradiction, that $H_{d-2}(\cS)$ is not volume rigid in $\R^{d}$. 
Choose a $(d-4)$-face $Z$ of $\cS$.
Then $\lk(Z)$ is a simplicial 2-sphere
and $\st(Z)$ is the  simplicial cone of $\lk(Z)$ with $Z$, so $H_{d-2}(\st(Z))$ is volume rigid in $\R^{d}$  by Lemma \ref{lem:plane-cone}, as long as $|V(\lk(Z))|\geq 5$  when $d\in \{5,6\}$. 

Consider the following two cases.  
\\[1mm]
{\bf \boldmath Case 1: 
there exists $H\subseteq H_{d-2}(\cS)$ and a $(d-1)$-face $X$ of $\cS$ such that $X\subseteq V(H)$ and $H$ is volume rigid in $\R^{d}$.}
We may suppose that $H$ has been chosen to be a maximal such hypergraph.  Since $\cS$ is a connected, simplicial homology $(d-1)$-manifold, it is strongly connected. Since $V(H)\neq V(\cS)$, otherwise $H_{d-2}(\cS)$ would be volume rigid in $\R^{d}$, this implies that we can find two $(d-1)$-faces $X_1,X_2$ of $\cS$ with $X_1\subseteq  V(H)$, $X_2\not\subseteq V(H)$ and $|X_1\cap X_2|= d-1$. Choose $Z\subseteq X_1\cap X_2$ with $|Z|=d-3$. Then $X_1,X_2\in \st(Z)$. If $d=4$ or $|V(\lk(Z))|\geq 5$ then $H_{d-2}(\st(Z))$ is volume rigid in $\R^{d}$ by the first paragraph of the proof. This would imply that  $H\cup H_{d-2}( \st(Z))$ is volume rigid in $\R^{d}$ by Lemma \ref{lem:glue_k}, and contradict the maximality of $H$. Hence $d\in \{5,6\}$, $|V(\lk(Z))|= 4$ and $\lk(Z)$ is the boundary complex of a 3-simplex. 
 
Let $X_1\cap X_2=Z'$, $X_1\sm X_2=\{x_1\}$ and $X_2\sm X_1=\{x_2\}$. Then 
$V(\lk(Z))=\{x_1,x_2\}\cup (Z'\sm Z)$. Since $\lk(Z)$ is the boundary complex of a 3-simplex, we have $\{x_1,x_2,z\}$ is a 2-face of $\lk(Z)$ for all $z\in Z'\sm Z$. Hence $\{x_1,x_2,z\}\cup Z''$ is a $(d-2)$-face of $\cS$ for all $z\in Z'\sm Z$ and $Z''\subseteq Z$ with $|Z''|=d-4$.  Since $Z$ is an arbitrary subset of $X_1\cap X_2$ of size $d-3$, this implies that $\{x_1,x_2\}\cup Z''$ is a $(d-2)$-face of $\cS$ for all $Z''\subseteq Z'$ with $|Z''|=d-3$. Since $X_i\in \cS$ for $1\leq i\leq 2$, we also have $Z''$ is a $(d-2)$-face of $\cS$ for all $Z''\subseteq X_1\cup X_2$ such that  $|Z''|=d-1$ and  $\{x_1,x_2\}\not\subset Z''$.
This implies that $H'=H_{d-2}(\cS)[X_1\cup X_2]\cong K_{d+1}^{d-1}$ and hence $H'$ is volume rigid in $\R^d$ by Lemma \ref{lem:simplex}. Since $X_1$ is a hyperedge of both $H$ and $H'$,  we can now apply Lemma  \ref{lem:glue_k} to deduce that $H\cup H'$ is volume rigid in $\R^d$ and contradict the maximality of $H$. 
 \\[1mm]
{\bf \boldmath Case 2:  $H$ is not volume rigid in $\R^{d}$ for all $H\subseteq H_{d-2}(\cS)$ such that $V(H)$ contains a $(d-1)$-face of $\cS$.} We proceed as in Case 1. Since $\cS$ is a connected, homology simplicial $(d-1)$-manifold, it is strongly connected and hence  there exist two $(d-1)$-faces $X_1,X_2$ of $\cS$ with  $|X_1\cap X_2|= d-1$. Choose $Z\subseteq X_1\cap X_2$ with $|Z|=d-3$. Then $X_1,X_2\in \st(Z)$. If $d=4$ or $|V(\lk(Z))|\geq 5$ then $H_{d-2}(\st(Z))$ is volume rigid   in $\R^{d}$ by the first paragraph of the proof. This would 
contradict the hypothesis of Case 2 since $X_1\in \st(Z)$. Hence $d\in \{5,6\}$, $|V(\lk(Z))|= 4$ and $\lk(Z)$ is the boundary complex of a 3-simplex. We can now apply the argument of Case 1 to deduce that $H'=H_{d-2}(\cS)[X_1\cup X_2]\cong K_{d+1}^{d-1}$ and hence $H'$ is volume rigid in $\R^d$ by Lemma \ref{lem:simplex}. This again contradicts the hypothesis of Case 2 and completes the proof of the theorem.
\end{proof}

In order to extend the above proof technique to verify Conjecture \ref{con:d-2} for all $d\geq 3$, we will need an analogue of Lemma \ref{lem:plane-cone}
for all $d\geq 4$. The inductive proof of this lemma would extend to all $d\geq 4$  if we could verify the following conjecture as the base case of our induction. 

\begin{conjecture} \label{lem:plane d-cone}
Let  $d\geq 4$ be an integer and $\cS$ be a simplicial 2-sphere with  $\lceil d/2\rceil+2$ vertices. Let $\cS*Z$ be the simplicial cone of $\cS$ with a set $Z$ of $d-3$ new vertices.
Then $H_{d-2}(\cS*Z)$ is volume rigid in $\R^{d}$. 
\end{conjecture}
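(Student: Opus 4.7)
The plan is to induct on $d$, with base cases $d\in\{3,4,5,6\}$ already established: for $d=3$, $\cS*Z=\cS$ and $H_1(\cS)=K_4$ is classically rigid in $\R^3$, while $d\in\{4,5,6\}$ are the computationally verified base cases of Lemmas~\ref{lem:plane 1-cone} and~\ref{lem:plane 2-cone}.  For the inductive step we distinguish two cases according to parity, since $\lceil d/2\rceil$ only changes every other step.

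\emph{Easy parity step} ($d$ odd, deducing the case $d+1$). Here $\lceil(d+1)/2\rceil+2=\lceil d/2\rceil+2$, so we may take $\cS'=\cS$ and $Z'=Z\cup\{z_\star\}$ and apply the coning lemma (Lemma~\ref{lem:v_add}) with $w=z_\star$.  Two facts are key.  First, the link hypergraph $H_w$ is canonically isomorphic to $H_{d-2}(\cS*Z)$, which is volume rigid in $\R^d$ by the inductive hypothesis.  Second, $\rank\cA_d(H_w)=|V(H)|-1$ by Lemma~\ref{lem:alt1}: $H_w$ is strongly connected (because $\cS*Z$ is a strongly connected $(d-1)$-complex) and each maximal simplex $t\cup Z$ induces a copy of $K_d^{d-1}$.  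Unfortunately, the deleted hypergraph $H-w$ consists only of the $2n-4$ maximal facets $t\cup Z$ of $\cS*Z$, far too few for $H-w$ to be volume rigid in $\R^d$, so Lemma~\ref{lem:v_add} does not apply directly.  Instead we invoke remark (a) following Lemma~\ref{lem:v_add} and analyse the limit matrix
\[
M=\begin{pmatrix} D'R(H_w,q) & DA(H_w,q)\\ R(H-w,q) & 0 \end{pmatrix},
\]
aiming to prove $\rank M=(d+1)(|V(H)|-1)-\binom{d+1}{2}$.

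\emph{Hard parity step} ($d$ even, deducing the case $d+1$). Here $|V(\cS')|=|V(\cS)|+1$.  By the classical edge-contractibility of simplicial $2$-spheres, choose an edge $uv$ of $\cS'$ such that $\cS=\cS'/uv$ is still a simplicial $2$-sphere on the correct number of vertices.  Combine the coning argument above (to add $z_\star$) with the vertex-splitting Lemma~\ref{lem:vsplit_kface} (to split $u$ from $v$) to produce $H_{d-1}(\cS'*Z')$ from $H_{d-2}(\cS*Z)$.  The resulting combined cone-and-split construction generalises the computer verifications for $d=5,6$ in the proof of Lemma~\ref{lem:plane 2-cone}, and requires exhibiting a realisation for which both the $A_{uv}$ matrix of Lemma~\ref{lem:vsplit_kface} and the limit matrix of the previous paragraph have full rank.

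\emph{Main obstacle.} The principal difficulty is the rank analysis of $M$ in the easy parity step.  By induction $\rank R(H_w,q)=d(|V|-1)-\binom{d+1}{2}$, and by Lemma~\ref{lem:alt1} $\rank A(H_w,q)=|V|-1$.  If the column spaces of these two matrices inside $\R^{|E_w|}$ were disjoint, the two ranks would add up to the desired total and we would be done.  In general they intersect nontrivially (already when $d=2$ and $H=K_4$, the column spaces of $R$ and $A$ have dimensions $5$ and $4$ in $\R^6$, forcing an intersection of dimension at least $3$), so the bottom block $R(H-w,q)$ must compensate for the overlap.  Concretely, one has to show that whenever $\dot x\in\ker R(H-w,q)$ and $R(H_w,q)\dot x$ lies in the column span of $DA(H_w,q)$, $\dot x$ is forced to be a trivial motion of $\R^d$.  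We expect that a suitable realisation $q$—for example, placing $V(\cS)$ at the vertices of a regular polytope inside a $3$-dimensional affine subspace of $\R^d$ and $Z$ at pairwise orthogonal coordinate directions—will give the limit matrix a block structure amenable to recursion on $d$.  An ambitious alternative would be a direct ``join rigidity'' theorem for $\cS*Z$ that bypasses coning altogether; such a result would handle both parity cases uniformly but would require substantial new ideas.
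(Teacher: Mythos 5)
This statement is an open conjecture in the paper: the authors offer no proof of it, and indeed they remark that even a verified Conjecture~\ref{lem:plane d-cone} would only supply the base case of an induction on $|V(\cS)|$ and would still leave the cases $|V(L_Z)|\leq\lceil d/2\rceil+1$ of Theorem~\ref{thm:d=5} unresolved. So there is no paper proof to compare against, and the only question is whether your argument actually closes the conjecture. It does not: it is a strategy outline whose two load-bearing steps are left unproven, and you say so yourself.

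Concretely, in the odd-to-even step you correctly observe that Lemma~\ref{lem:v_add} is inapplicable because $H-z_\star$ consists only of the $2n-4$ facet hyperedges $t\cup Z$ and is nowhere near volume rigid in $\R^d$; the fallback via remark~(a) then requires showing that the limit matrix $M$ has rank $(d+1)(|V|-1)-\binom{d+1}{2}$, and your own analysis shows the ranks of $D'R(H_w,q)$ together with $DA(H_w,q)$ and of $R(H-w,q)$ do not simply add, since the relevant column spaces overlap. The statement you would need --- that any $\dot x\in\ker R(H-w,q)$ with $R(H_w,q)\dot x$ in the column span of $DA(H_w,q)$ is trivial --- is exactly the content of the conjecture in disguise, and "we expect that a suitable realisation $q$ ... will give the limit matrix a block structure amenable to recursion" is a hope, not an argument. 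The even-to-odd step is in worse shape: it composes a coning (itself resting on the unproved rank claim above) with a vertex split whose hypothesis $\rank A_{uv}(H,q,\bd)=d+1$ is asserted to "generalise the computer verifications for $d=5,6$" without any realisation being exhibited; note also that Lemma~\ref{lem:vsplit_kface} requires $(H/uv,q)$ to already be infinitesimally volume rigid in $\R^{d+1}$, so this step cannot even be set up until the coning step is complete. What you have is a reasonable research plan that isolates where the difficulty lies; it is not a proof, and the conjecture remains open.
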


The hypothesis that $|V(\cS)|\geq \lceil d/2\rceil+2$ is needed in Conjecture \ref{lem:plane d-cone} since, if $|V(\cS)|\leq\lceil d/2\rceil+1$,  then a straightforward calculation shows that $|E(H_{d-2}(S*Z))| < d|V(S*Z)|-\binom{d+1}2$, so $S*Z$ cannot be volume rigid in $\mathbb R^d$. Note, however, that even if we could verify Conjecture \ref{lem:plane d-cone}, we would still need to extend the argument in Cases 1 and 2 of the proof of Theorem \ref{thm:d=5} to cover the situation when $5\leq |V(\lk(Z))|\leq\lceil d/2\rceil+1$ in order to verify Conjecture \ref{con:d-2}.

\section{Closing remarks and open problems}

\subsection{Volume rigidity of $d$-uniform hypergraphs in $\R^d$}
It seems to be difficult to find families of  $d$-uniform hypergraphs which are volume rigid in $\R^d$.
One notable exception is the following recent strengthening of Theorem \ref{thm:complete} by  Lew, Nevo, Peled, and Raz \cite[Theorem 2]{LNPR} to include complete $d$-uniform hypergraphs with at least $d+2$ vertices.

\begin{theorem}\label{thm:LNPR}
$K_n^{k+1}$ is volume rigid in $\R^d$ if $1\leq k\leq d-2$ and $n\geq d+1$, or if $1 \leq k = d-1$ and $n \geq d+2$.    
\end{theorem}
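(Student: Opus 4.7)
The first assertion of Theorem~\ref{thm:LNPR}, covering $2 \leq k \leq d-1$ and $n \geq d+1$, is exactly Theorem~\ref{thm:complete} and has already been proved. I therefore concentrate on the remaining case $k=d$, $n \geq d+2$, which I would prove by induction on $n$. In the inductive step, assume $K_n^d$ is volume rigid in $\R^d$ for some $n \geq d+2$, and let $w$ be a new vertex. The extra hyperedges of $K_{n+1}^d$ are exactly those of the form $S \cup \{w\}$ with $|S| = d-1$. For each such $S$, pick any $(d+1)$-subset $T \subseteq V(K_n^d)$ containing $S$; then $K_{n+1}^d[T \cup \{w\}]$ is a copy of $K_{d+2}^d$, volume rigid in $\R^d$ by the base case. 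Since this copy shares the $d+1 \geq d$ vertices of $T$ with the current volume-rigid hypergraph, Lemma~\ref{lem:glue_k} preserves volume rigidity under the gluing. Iterating over all $(d-1)$-subsets $S$ builds $K_{n+1}^d$ as a successive gluing of volume-rigid pieces, completing the induction.

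The crux is thus the base case: $K_{d+2}^d$ is volume rigid in $\R^d$. The relevant counts are $|E(K_{d+2}^d)| = \binom{d+2}{d} = \binom{d+2}{2}$ and the required rank $d(d+2) - \binom{d+1}{2} = \binom{d+2}{2}-1$, so the rigidity matrix $R(K_{d+2}^d, p)$ must have corank exactly one at a generic $p$. I would imitate the template of Lemma~\ref{lem:simplex}: fix a highly symmetric realisation, for instance $p(v_i) = \be_i \in \R^{d+2}$ so that all vertices lie on the hyperplane $\sum_j x_j = 1$ and the full symmetric group $S_{d+2}$ acts on the realisation, compute $R(K_{d+2}^d, p)$ in this special position, and after appropriate row and column scalings try to recognise the resulting matrix as a known combinatorial object. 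The complementation bijection $S \leftrightarrow [d+2] \setminus S$ between $d$-subsets and $2$-subsets of $[d+2]$ suggests a reduction to a matrix in the Johnson scheme on $\binom{[d+2]}{2}$, whose spectrum is known; the $S_{d+2}$-symmetry then decomposes the computation into small representation-theoretic isotypic blocks in which corank one can be verified and an explicit one-dimensional kernel identified.

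The hard part will be this base case. For $d \geq 3$, neither of the main inductive tools in this paper applies directly: for any pair of vertices $u,v$ the contraction $K_{d+2}^d/uv$ equals $K_{d+1}^d$, which is not volume rigid in $\R^d$ (too few hyperedges), so Lemma~\ref{lem:vsplit_kface} cannot be used in the forward direction; and invoking the coning Lemma~\ref{lem:v_add} would in turn require volume rigidity of $K_{d+1}^d$ in $\R^{d-1}$, which also fails by the count. Moreover, because the number of hyperedges exceeds the required rank by exactly one, the proof must not only produce a full-rank submatrix of size $\binom{d+2}{2}-1$ but also explicitly account for the unique linear dependence among the $\binom{d+2}{2}$ rows of $R(K_{d+2}^d,p)$. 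That dependence should correspond to an algebraic identity among the squared volumes of the $(d-1)$-faces of a $(d+1)$-simplex, and isolating and verifying this identity is likely the main technical work in the argument of Lew, Nevo, Peled, and Raz.
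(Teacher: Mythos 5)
First, a point of context: the paper does not prove this statement. Theorem~\ref{thm:LNPR} is quoted from Lew, Nevo, Peled and Raz \cite[Theorem 2]{LNPR}; the only part established in this paper is the subcase $2\leq k\leq d-1$, $n\geq d+1$, namely Theorem~\ref{thm:complete}, and you identify that correctly. So there is no in-paper argument to compare against, and the question is whether your sketch of the genuinely new case $k=d$ stands on its own.

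It does not, and the gap is exactly where you flag it. Your reduction of the case $k=d$, $n\geq d+2$ to the single base case $K_{d+2}^d$ is sound: the gluing via Lemma~\ref{lem:glue_k} works as described (indeed one gluing of $K_{d+2}^d[T\cup\{w\}]$ onto $K_n^d$ along the $d+1\geq d$ shared vertices already produces a spanning volume rigid subhypergraph of $K_{n+1}^d$), the count $|E(K_{d+2}^d)|=\binom{d+2}{2}$ versus required rank $d(d+2)-\binom{d+1}{2}=\binom{d+2}{2}-1$ is correct, and your observation that neither Lemma~\ref{lem:vsplit_kface} nor Lemma~\ref{lem:v_add} is available here is accurate. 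But the base case is the entire content of the new assertion, and for it you offer only a programme, not a proof: no computation is carried out, the claimed unique row dependence (a Cayley--Menger-type relation among the squared volumes of the $d$-subsets of $d+2$ points, generalising the $d=2$ case) is neither exhibited nor shown to be the \emph{only} dependence, and proving corank exactly one rather than at least one is precisely the difficulty. Moreover the proposed special position would fail as stated: placing the $d+2$ vertices at $\be_1,\dots,\be_{d+2}\in\R^{d+2}$ puts them in a $(d+1)$-dimensional affine subspace, so---unlike in Lemma~\ref{lem:simplex}, where $|V|=d+1$ and the points lie in a hyperplane isometric to $\R^d$---this configuration is not (congruent to) a realisation in $\R^d$ at all, and its rigidity matrix bears on volume rigidity in $\R^{d+1}$, where the target rank is different. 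A genuinely $d$-dimensional configuration of $d+2$ points cannot carry the full $S_{d+2}$-symmetry by isometries, so the representation-theoretic decomposition you propose does not transfer directly. As it stands, the proposal proves nothing beyond what Theorem~\ref{thm:complete} already gives.
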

\noindent {We became aware of \cite{LNPR} shortly before submitting this paper. The proof techniques of Theorem \ref{thm:complete} and \cite[Theorem 2]{LNPR} are significantly different.  Both results were obtained independently and contemporaneously.
}

Note that the  $k=d$ case of Theorem~\ref{thm:LNPR} does not extend  to $(d-1)$-skeleton hypergraphs of simplicial $(d-1)$-manifolds since these hypergraphs may not have enough hyperedges to be volume rigid in $\R^d$. Consider, for example, the $2$-skeleton hypergraph of   a simplicial 2-sphere in $\R^3$.

\subsection{Infinitesimal volume rigidity of convex simplicial polytopes}
Whiteley \cite{W84} showed that, when $d\geq 3$, the 1-skeleton of every convex simplicial $d$-polytope is infinitesimally rigid in $\R^d$ (without any assumption of genericity). Could this result be extended to show that the $k$-skeleton of every convex simplicial $d$-polytope is infinitesimally volume rigid in $\R^d$  for all  $1\leq k\leq d-2$?

\section*{Acknowledgements}
We would like to thank David Ellis for his suggestion that we could use the result from \cite{L} to simplify our proof of Lemma \ref{lem:simplex}, and Alan Lew, Yuval Peled, Eran Nevo and Orit Raz for sharing a draft of their preprint \cite{LNPR} just before both articles were submitted to arXiv. We also thank the anonymous referees for many helpful suggestions that improved the paper, including the statement of Theorem \ref{thm:new}.
This work was supported by JST PRESTO Grant Number JPMJPR2126, JSPS KAKENHI Grant Number 20H05961, and EPSRC overseas travel grant EP/T030461/1.


\begin{thebibliography}{10}
\bibitem{AR}
{L. Asimow and B. Roth},
\newblock The rigidity of graphs,
\newblock {\itshape Trans. Amer. Math. Soc.}, 245 (1978), 279--289.

\bibitem{BS}
C.~Borcea and I.~Streinu. 
Realizations of volume frameworks. 
In International Workshop on Automated Deduction in Geometry, pages 110--119. Springer, 2012.

\bibitem{BNP}
D. Bulavka, E. Nevo, and Y. Peled.
\newblock Volume rigidity and algebraic shifting. 
\newblock {\itshape Journal of Combinatorial Theory, Series B},
170 (2025), 189--202.

\bibitem{C} {A. L. Cauchy}, Sur les polygones et les poly\`edres: second m\'emoire, {J. l'\'Ecole Polytech.} (1813), available at
\url{http://sites.mathdoc.fr/cgi-bin/oetoc?id=OE_CAUCHY_2_1}



\bibitem{D}
{M. Dehn},
\newblock {\"{U}ber die {S}tarrheit konvexer {P}olyeder},
\newblock {\itshape Math. Ann.}, 77 (1916), 466--473.




\bibitem{F} {A. L. Fogelsanger}, The generic rigidity of minimal cycles, Ph.D thesis, Cornell University, 1988, available at 
\url{http://www.armadillodanceproject.com/AF/Cornell/rigidity.htm}

\bibitem{H}
R.~Horn and C.~Johnson, 
Matrix Analysis, 2nd eds, Cambridge University Press, 2013.

\bibitem{K}
{G.~Kalai}, 
\newblock Rigidity and the lower bound theorem. I,
\newblock {Invent.~Math.}, 88 (1) (1987), 125--151.

\bibitem{LNPR} A.~Lew, E.~Nevo, Y.~Peled, and O.E.~Raz,
\newblock On the $k$-volume rigidity of a simplicial complex in $\R^d$.
\newblock Forum of Mathematics, Sigma, Volume 13, 2025, e195. 
\newblock \url{https://doi.org/10.1017/fms.2025.10140}


\bibitem{L} L Lov\`asz. 
\newblock On the Shannon capacity of a graph. 
\newblock IEEE Trans. Inf. Th. 25 (1979) 1-7.



\bibitem{LP} E. Lubetzky and Y. Peled. 
\newblock The threshold for stacked triangulations. 
\newblock International Mathematics Research
Notices, 2022. \url{https://doi.org/10.1093/imrn/rnac276}.

\bibitem{M} S. Murai. 
\newblock Algebraic shifting of cyclic polytopes and stacked polytopes. 
\newblock Discrete Math., 307 (2007) 1707-1721.

\bibitem{S} 
\newblock J.~Southgate.
\newblock Bounds on embeddings of triangulations of spheres.
\newblock \url{https://arxiv.org/abs/2301.04394}.


\bibitem{RS} E. Steinitz and H. Rademacher. Vorlesungen \"uber die Theorie der Polyeder unter Einschluss der Elemente der
Topologie. Springer, 1976. Reprint of the original 1934 edition.

\bibitem{Wcone} W. Whiteley, Cones, infinity and one-story buildings, Structural Topology, 8 (1983) 53-70.

\bibitem{W84} {W. Whiteley},
Infinitesimally rigid polyhedra I. Statics of frameworks,
 Trans. Am. Math. Soc., 285 (1984), 431-465.

\bibitem{Wvsplit} {W. Whiteley},
\newblock Vertex splitting in isostatic frameworks,
\newblock {Structural Topology} 16 (1990), 23-30.

\bibitem{Wlong} {W. Whiteley},
Some matroids from discrete applied geometry,  in {Matroid Theory}, J. E. Bonin, J. G. Oxley, and B.
Servatius eds., Contemporary
Mathematics 197, American Mathematical Society,  1996, 171--313.

\end{thebibliography}
\end{document}